\numberwithin{equation}{section}
\theoremstyle{plain}
\newtheorem{theorem}{Theorem}[section]
\newtheorem{lemma}[theorem]{Lemma}
\newtheorem{proposition}[theorem]{Proposition}
\theoremstyle{definition}
\newtheorem{conjecture}[theorem]{Conjecture}
\theoremstyle{remark}
\newtheorem{remark}[theorem]{Remark}
\renewcommand{\Re}{\operatorname{Re}}
\renewcommand{\Im}{\operatorname{Im}}
\newcommand{\vol}{\operatorname{vol}}
\newcommand{\sym}{\operatorname{sym}}
\newcommand{\GL}{\operatorname{GL}}
\newcommand{\SL}{\operatorname{SL}}
\newcommand{\dd}{\mathrm{d}}
\def\@tocline#1#2#3#4#5#6#7{\relax
  \ifnum #1>\c@tocdepth 
  \else
    \par \addpenalty\@secpenalty\addvspace{#2}%
    \begingroup \hyphenpenalty\@M
    \@ifempty{#4}{%
      \@tempdima\csname r@tocindent\number#1\endcsname\relax
    }{%
      \@tempdima#4\relax
    }%
    \parindent\z@ \leftskip#3\relax \advance\leftskip\@tempdima\relax
    \rightskip\@pnumwidth plus4em \parfillskip-\@pnumwidth
    #5\leavevmode\hskip-\@tempdima
      \ifcase #1
       \or\or \hskip 1em \or \hskip 2em \else \hskip 3em \fi%
      #6\nobreak\relax
    \hfill\hbox to\@pnumwidth{\@tocpagenum{#7}}\par
    \nobreak
    \endgroup
  \fi}
\begin{document}

\title
{Joint value distribution of Hecke--Maass forms}
\author{Shenghao Hua}
\address{Data Science Institute and School of Mathematics \\ Shandong University \\ Jinan \\ Shandong 250100 \\China}
\email{shenghao.hua@epfl.ch}
\curraddr
{\itshape EPFL-SB-MATH-TAN
Station 8,
1015 Lausanne, Switzerland}

\author{Bingrong Huang}
\address{Data Science Institute and School of Mathematics \\ Shandong University \\ Jinan \\ Shandong 250100 \\China}
\email{brhuang@sdu.edu.cn}

\author{Liangxun Li}
\address{Data Science Institute and School of Mathematics \\ Shandong University \\ Jinan \\ Shandong 250100 \\China}
\email{lxli@mail.sdu.edu.cn}

\date{\today}

\begin{abstract}
  In this paper, we formulate a conjecture on joint distribution of Hecke--Maass cusp forms. To support our conjecture, we prove two conditional results on joint moments of two Hecke--Maass cusp forms, which confirms statistical independence of orthogonal cusp forms.
\end{abstract}

\keywords{Mass distribution, statistically independence, joint distribution, automorphic forms, moments of $L$-functions}

\subjclass[2010]{11F30, 11L07, 11F66}

\thanks{This work was supported by  the National Key R\&D Program of China (No. 2021YFA1000700) and NSFC (No. 12031008).}

\maketitle

\section{Introduction} \label{sec:Intr}

The value distribution of eigenfunctions in the semiclassical limit is one of the main problems in analytic number theory and quantum chaos.
Let $\mathbb{H}=\{z=x+iy:x\in\mathbb{R}, y>0\}$ be the upper half plane with the hyperbolic measure $\dd\mu z = \dd x\dd y/y^2$ and $\Gamma=\SL_2(\mathbb{Z})$ the modular group.
The modular surface $\Gamma\backslash\mathbb{H}$ is a noncompact, but finite area hyperbolic surface.
For automorphic functions on $\Gamma\backslash\mathbb{H}$ we have the inner product which is defined by $\langle f,g\rangle := \int_{\Gamma\backslash\mathbb{H}} f(z)\overline{g(z)} \dd\mu z$.
The Laplacian is given by $\Delta=-y^2 (\partial^2/\partial x^2+\partial^2/\partial y^2)$, which has both discrete and continuous spectra.
The discrete spectrum consists of the constants and the space of cusp forms, for which we can take an orthonormal basis $\{u_j\}$  of Hecke--Maass forms, which are real valued joint eigenfunctions of both the Laplacian and all the Hecke operators, and decay exponentially at the cusp of the modular domain.
The modular surface  carries a further symmetry induced by the
reflection $z\mapsto -\bar{z}$ of $\mathbb H$ and our $u_j$'s are either even or odd with respect to this symmetry.
Berry \cite{Berry}  suggested  that eigenfunctions for chaotic systems are modeled by random waves, it is believed that eigenfunctions on a compact hyperbolic surface have a Gaussian value distribution as the eigenvalue tends to infinity, and the moments of an $L^2$-normalized eigenfunction should be given by the Gaussian moments.
More precisely, we have the following Gaussian moments conjecture. (See e.g. Humphries \cite[Conjecture 1.1]{Humphries}.)

\begin{conjecture}\label{conj:1}
  Fix any $\psi\in \mathcal{C}_c^\infty(\Gamma\backslash \mathbb{H}) $.
  Let $f$ be a Hecke--Maass cusp form with the spectral parameter $t_f$. Assume
  \[ \frac{1}{\vol(\Gamma\backslash \mathbb{H})}   \int_{\Gamma\backslash \mathbb{H}} |f(z)|^2\dd\mu z = 1. \]
  Then for any $n\in\mathbb{Z}_{\geq1}$, we have
  \begin{equation}\label{eqn:nthmoment}
     \int_{\Gamma\backslash \mathbb{H}} \psi(z) f(z)^n \dd\mu z = c_n  \int_{\Gamma\backslash \mathbb{H}}\psi(z)\dd\mu z + o(1),
  \end{equation}
  as $t_f\rightarrow\infty$. Here
  \begin{equation}\label{eqn:cn}
    c_n = \left\{ \begin{array}{ll}
                    (2n-1)!!,  & \textrm{if $n$ is even,} \\
                    0,  & \textrm{if $n$ is odd.}
                  \end{array} \right.
  \end{equation}
\end{conjecture}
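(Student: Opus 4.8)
It is cleanest to read \eqref{eqn:nthmoment} as a statement of \emph{higher-order quantum unique ergodicity}: writing $\bar\psi := \vol(\Gamma\bs\mathbb H)^{-1}\int_{\Gamma\bs\mathbb H}\psi\,\dd\mu$, the claim is that the signed measures $f^n\,\dd\mu$ converge weakly-$*$ on $\mathcal C_c^\infty(\Gamma\bs\mathbb H)$ to $c_n\,\dd\mu$. Since $f$ decays exponentially at the cusp, $f^n\in L^2(\Gamma\bs\mathbb H)$, and I would begin with the spectral identity
\[
  \int_{\Gamma\bs\mathbb H}\psi\, f^n\,\dd\mu
  = \bar\psi\int_{\Gamma\bs\mathbb H} f^n\,\dd\mu
  + \sum_j \langle\psi,u_j\rangle\,\langle f^n,u_j\rangle
  + (\text{Eisenstein contribution}).
\]
Because $\psi$ is fixed and smooth, its spectral coefficients $\langle\psi,u_j\rangle$ and the Eisenstein analogues decay faster than any power of the spectral parameter, so both the sum and the continuous integral may be truncated at spectral parameter $\ll t_f^{\varepsilon}$ with negligible error. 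This reduces the whole problem to two kinds of \emph{periods}: the global moment $\vol^{-1}\int f^n\,\dd\mu$, which must tend to $c_n$ and produces the main term $c_n\int\psi\,\dd\mu$, and the finitely many cross terms $\langle f^n,u_j\rangle=\int_{\Gamma\bs\mathbb H}u_j f^n\,\dd\mu$ (with their Eisenstein counterparts), each of which must tend to $0$.

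The building blocks are then evaluated through periods and central $L$-values. One partial simplification comes from the reflection $z\mapsto-\bar z$: if $f$ is odd, then $\int f^n\,\dd\mu=(-1)^n\int f^n\,\dd\mu$ vanishes for odd $n$, disposing of the global odd moments in that case; all remaining odd moments and all cross terms require genuine arithmetic cancellation rather than symmetry. The model case $n=2$ is exactly classical QUE: here $f^2=|f|^2$, $\int f^2=\vol(\Gamma\bs\mathbb H)$, and by Watson's formula $|\langle u_j,f^2\rangle|^2$ is, up to explicit archimedean factors, the central value $L(1/2,u_j\times f\times f)=L(1/2,u_j)L(1/2,u_j\times\Sym^2 f)$, whose smallness in the $t_f$ aspect (subconvexity, equivalently the theorem of Lindenstrauss--Soundararajan) forces $\langle u_j,f^2\rangle\to0$ and yields $c_2$. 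For $n\ge3$ I would iterate this mechanism, expanding $f^n$ by repeated Rankin--Selberg and triple-product unfolding so that $\int f^n$ and $\int u_jf^n$ become finite combinations of central values of $L$-functions attached to $u_j$ and to the symmetric powers $\Sym^k f$ ($k\le n$) appearing in $f^{\otimes n}$, multiplied by explicit archimedean local integrals. The Gaussian constant $c_n$ should drop out of those archimedean integrals together with the diagonal main-term combinatorics, reproducing the Wick/random-wave pairing prediction, while the surviving off-diagonal and cross terms are controlled by the size of the corresponding central values.

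The decisive obstacle is precisely the control of these higher-degree central values. Already at $n=3$ one meets $L(1/2,f\times f\times f)=L(1/2,f)L(1/2,\Sym^2 f\times f)$, at $n=4$ factors of $L(1/2,\Sym^2 f\times\Sym^2 f)$ type, and in general the degree of the relevant $L$-functions grows with $n$; obtaining the required decay amounts to subconvex, or Lindel\"of-on-average, bounds that lie well beyond present technology (and for large $n$ even the requisite symmetric-power functoriality for Maass forms is not available). For this reason a complete unconditional proof of \eqref{eqn:nthmoment} is not currently feasible, which is exactly why the conjecture remains open and why one aims instead for \emph{conditional} results. Concretely, I would expect to establish the statement rigorously (i) for the low moments $n\le4$, where the inputs are accessible through known subconvexity, Motohashi-type spectral identities, and the spectral large sieve, and (ii) for all $n$ under the Generalized Lindel\"of (or Riemann) Hypothesis, which supplies the missing bounds, guarantees that only the diagonal pairings survive, and thereby delivers the Gaussian constants $c_n$ uniformly in $n$.
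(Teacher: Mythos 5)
The statement you are addressing is Conjecture \ref{conj:1}: it is an \emph{open conjecture}, the paper offers no proof of it, and your text is not a proof either --- it is a strategy sketch that, as you yourself concede, cannot currently be completed. There is therefore nothing to verify: reading \eqref{eqn:nthmoment} through the spectral decomposition, isolating the constant eigenfunction to produce the main term $c_n\int\psi\,\dd\mu$ and demanding that the cross terms $\langle f^n,u_j\rangle$ and their Eisenstein analogues tend to $0$, is indeed the standard framing (and is exactly how the paper attacks the cases it can reach, e.g.\ $\langle \psi f^2, g\rangle$ in Theorem \ref{thm:psi f^2g} and $\langle f^2,g^2\rangle$ in Theorem \ref{thm:22}, both only conditionally), but every step that matters in your outline is deferred to ``arithmetic cancellation'' that no one knows how to produce.

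Beyond this structural point, several of your specific claims are inaccurate. First, QUE ($n=2$) is not ``equivalently'' subconvexity: Watson's formula shows that subconvexity for $L(1/2,u_j\times f\times f)$ in the $t_f$-aspect would \emph{imply} QUE with a rate, but that subconvexity is open; Lindenstrauss and Soundararajan proved QUE by measure rigidity and escape-of-mass arguments, which yield no rate of decay for $\langle u_j,f^2\rangle$. Second, your claim (i) that $n\le 4$ is rigorously accessible with known tools contradicts the paper itself: the case $n=4$ is open unconditionally (only nontrivial upper bounds on the $L^4$-norm are known), and the Buttcane--Khan asymptotic requires GLH and treats only $\psi\equiv 1$; the case $n=3$ is a recent theorem of Huang, not a routine consequence of the spectral large sieve. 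Third, your claim (ii) that GLH delivers the conjecture for all $n$ is unsupported: for $n\ge 5$ the periods $\int f^n\,\dd\mu$ and $\int u_j f^n\,\dd\mu$ admit no known factorization into central $L$-values (Watson/Ichino-type formulas handle only triple products), the requisite symmetric-power functoriality is unavailable, and even where identities exist GLH supplies only upper bounds, whereas extracting the main term requires asymptotics for first moments of $L$-functions --- indeed the paper remarks that already for $(a_1,a_2)=(2,2)$ with a general observable $\psi$ the problem seems out of reach even under GRH and GRC. So your proposal both fails to prove the statement (necessarily, since it is a conjecture) and overstates what is provable, conditionally and unconditionally.
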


\begin{remark}
  Let $X\sim \mathcal{N}(0,1)$ be a standard Gaussian normal distribution. Then for any $n\in\mathbb{N}$, we have $\operatorname{E}(X^n)=c_n$.
\end{remark}

For $n=2$, this is the well-known quantum unique ergodicity (QUE) conjecture of Rudnick and Sarnak \cite{RudnickSarnak1994behaviour}. QUE was solved by breakthrough papers of Lindenstrauss \cite{lindenstrauss2006invariant} and Soundararajan \cite{soundararajan2010quantum}.

For  $n=3$, Huang \cite{Huang} recently solved  this cubic moment problem, extending the result of Watson \cite{watson2008rankin} for the case
$\psi(z)=1$ for all $z\in \mathbb{H}$. 

For $n=4$, this is the $L^4$-norm problem. Unconditionally, only nontrivial upper bounds are known for the modular surface. See e.g. \cite{HumphriesKhan, Ki}. Buttcane and Khan \cite{ButtcaneKhan2017} proved the asymptotic formula when $\psi(z)=1$ for all $z\in \mathbb{H}$,
by assuming the generalized Lindel\"of Hypothesis (GLH).

\medskip

In this paper, our main interest is the joint value distribution of  orthogonal automorphic forms in the semiclassical limit.
Let $J\geq2$ be a fixed integer.
Let $f_j$, $1\leq j\leq J$, be pairwisely orthogonal real-valued Hecke--Maass cusp forms for $\SL_2(\mathbb{Z})$, i.e., $\langle f_i,f_j\rangle=0$ for all $1\leq i\neq j\leq J$.
Assuming
\begin{equation}\label{eqn:L2normalization}
  \frac{1}{\vol(\Gamma\backslash \mathbb{H})}   \int_{\Gamma\backslash \mathbb{H}} |f_j(z)|^2\dd\mu z
  = 1, \quad 1\leq j\leq J.
\end{equation}
For positive integers $a_j$, $1\leq j\leq J$, it is interesting to consider values of  $ f_j^{a_j}$ against a fixed observable $\psi$.  We formulate the following conjecture, which predicts that the values of distinct Hecke--Maass cusp forms should behave like  independent random waves.

\begin{conjecture}\label{conj:2}
  With $f_j$ and $a_j$ as above. Then $\{f_j^{a_j}\}_{j=1}^{J}$ are statistically independent; that is,
  for any $\psi\in \mathcal{C}_c^\infty(\Gamma\backslash \mathbb{H})$, we have
  \[
     \int_{\Gamma\backslash \mathbb{H}} \psi(z) \prod_{j=1}^{J} f_j(z)^{a_j}  \dd\mu z
    = \prod_{j=1}^{J} c_{a_j} \int_{\Gamma\backslash \mathbb{H}}\psi(z)\dd\mu z + o(1)
  \]
  as $\min(t_{f_1},\ldots,t_{f_J})$ goes to infinity.
  Here $c_a$ is defined as in \eqref{eqn:cn}.
\end{conjecture}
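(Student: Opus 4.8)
\medskip
\noindent\textbf{Sketch of the approach.}
The plan is to isolate the main term by spectrally expanding the fixed observable $\psi$, and then to reduce everything to triple products that can be evaluated through Watson's formula. Writing $E_t(z)=E(z,\tfrac12+it)$ for the Eisenstein series and $\{u_k\}$ for the orthonormal Hecke--Maass basis, I would start from
\[
  \psi = \frac{\langle\psi,1\rangle}{\vol(\Gamma\backslash\mathbb{H})} + \sum_{k\geq1}\langle\psi,u_k\rangle\, u_k + \frac{1}{4\pi}\int_{-\infty}^{\infty}\langle\psi,E_t\rangle\, E_t\,\dd t.
\]
Because $\psi\in\mathcal{C}_c^\infty$, the coefficients $\langle\psi,u_k\rangle$ and $\langle\psi,E_t\rangle$ decay faster than any polynomial in the spectral parameter, so after a negligible truncation only $\phi$ of bounded spectral parameter contribute. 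Substituting, the constant term produces the candidate main term $\frac{\langle\psi,1\rangle}{\vol(\Gamma\backslash\mathbb{H})}\int_{\Gamma\backslash\mathbb{H}}\prod_j f_j^{a_j}\,\dd\mu z$, and it remains to prove two statements: (i) the pure joint moment satisfies $\frac{1}{\vol(\Gamma\backslash\mathbb{H})}\int_{\Gamma\backslash\mathbb{H}}\prod_j f_j^{a_j}\,\dd\mu z=\prod_j c_{a_j}+o(1)$ (the case $\psi=1$); and (ii) for every fixed cusp form or Eisenstein series $\phi$ of bounded spectral parameter, $\int_{\Gamma\backslash\mathbb{H}}\phi\,\prod_j f_j^{a_j}\,\dd\mu z=o(1)$, with enough uniformity to sum the spectral tail. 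Both (i) and (ii) I would reduce to the basic triple product $\langle g_1g_2,h\rangle$ by iterating the spectral decomposition on $\prod_j f_j^{a_j}$ one pair of factors at a time.

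For the main term in (i), the leading contribution arises by repeatedly projecting a product of two forms onto the constant function. Here the orthogonality hypothesis enters decisively: a same-form projection gives $\langle f_j^2,1\rangle=\langle f_j,f_j\rangle=\vol(\Gamma\backslash\mathbb{H})$, whereas a cross-form projection gives $\langle f_if_j,1\rangle=\langle f_i,f_j\rangle=0$ by \eqref{eqn:L2normalization} and the assumed orthogonality. Thus every cross-pairing between distinct forms dies, the surviving self-pairings are counted with the combinatorial factor $(2a_j-1)!!$ (and vanish when $a_j$ is odd, because an unpaired factor remains), and the product factorizes into $\prod_j c_{a_j}\,\vol(\Gamma\backslash\mathbb{H})$, matching Conjecture~\ref{conj:1} on the diagonal. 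Everything left over in (i), together with all of (ii), consists of projections onto \emph{non-constant} $\phi$, i.e.\ triple products $\langle g_1g_2,\phi\rangle$ with $g_1,g_2$ built from the $f_j$ and $\phi$ of bounded parameter. By Watson's formula the square of each such period equals, up to the arithmetic factors $\Lambda(1,\operatorname{sym}^2 \cdot)$, the central value $L(\tfrac12,g_1\times g_2\times\phi)$ times an explicit archimedean factor; Stirling's formula shows this factor is exponentially small whenever the three spectral parameters are far from resonance, i.e.\ no signed sum $\pm t_{g_1}\pm t_{g_2}\pm t_\phi$ is small. This collapses all contributions in which two large parameters are well separated.

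The main obstacle is the \emph{near-diagonal} range, where two large spectral parameters nearly coincide so that the archimedean factor no longer provides decay and the size of the period is governed by $L(\tfrac12,g_1\times g_2\times\phi)$ itself. To control this range I would invoke the generalized Lindel\"of Hypothesis (or, where available, subconvexity together with spectral fourth-moment estimates) to bound these central $L$-values, and then show that the set of near-diagonal configurations is thin enough that their total contribution is still $o(1)$ as $\min_j t_{f_j}\to\infty$, uniformly over all admissible parameter configurations. This is exactly the step that forces the theorems in this paper to be conditional, and it is the technical heart of the argument: extracting the factorized main term $\prod_j c_{a_j}$ and simultaneously bounding the mixed near-diagonal error requires sharp, uniform information on the relevant Rankin--Selberg and triple-product $L$-functions that is not currently available unconditionally.
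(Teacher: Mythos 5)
You are attempting to prove Conjecture \ref{conj:2}, which the paper does not prove: it is stated as a conjecture, supported only by the conditional special cases $(a_1,a_2)=(1,1),(2,1)$ (Theorem \ref{thm:psi f^2g}, under GLH) and $(a_1,a_2)=(2,2)$ with $\psi=1$ (Theorem \ref{thm:22}, under GRH and GRC). Beyond that mismatch, two steps of your sketch are genuinely wrong. First, the main-term bookkeeping. Under the normalization \eqref{eqn:L2normalization}, projecting a pair onto the constant function only ever yields $\langle f_j^2,1\rangle=\vol(\Gamma\backslash\mathbb{H})$, i.e.\ a factor $1$ for each even $a_j$ --- never $(2a_j-1)!!$. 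The Wick factor for $a_j\geq 4$ comes precisely from the \emph{non-constant} spectrum: already for $J=1$, $n=4$, the Buttcane--Khan result cited in the paper gives $L^4$-norm main term $3=1+2$, where $1$ is the constant-function contribution and $2$ comes from the cuspidal and Eisenstein contributions concentrated near the diagonal $t_k\approx 2t_f$ --- exactly the ``near-diagonal configurations'' your argument discards as $o(1)$. So if your scheme worked as described, it would output the wrong constant (e.g.\ $1$ instead of $c_4c_2=3$ for $(a_1,a_2)=(4,2)$), which is a contradiction internal to the proposal.

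Second, the quantitative error estimate fails even in the cases where the main term \emph{is} just the constant contribution. Bounding the near-diagonal spectral sum term-by-term under GLH after taking absolute values gives $O\big((t_f+t_g)^{\varepsilon}\big)$, not $o(1)$: in the case $(a_1,a_2)=(2,2)$, $\psi=1$, Parseval and Watson reduce the problem to the mixed moment \eqref{mixed moment}, and inserting $L(1/2,\cdot)\ll t^{\varepsilon}$ together with the Weyl-law density ($\asymp t_j$ forms per unit interval, so the near-diagonal set is not ``thin'') leaves a sum of size roughly
\[
(t_ft_g)^{-\frac14}\int_1^{2t_f}\big(1+|t-2t_f|\big)^{-\frac14}\big(1+|t-2t_g|\big)^{-\frac14}\,\dd t \;\asymp\; t^{\varepsilon}
\quad\text{when } t_f\asymp t_g,
\]
with no decay. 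This is precisely why the paper's proof of Theorem \ref{thm:22} does \emph{not} use GLH but instead assumes GRH and GRC and runs Soundararajan's fractional-moment method (Proposition \ref{prop:mixedmonents1}) to win a power of $\log$. Finally, your iteration ``one pair of factors at a time'' requires uniform control of triple products in which \emph{all three} spectral parameters are large (up to $\sum_j a_jt_{f_j}$), not merely products against a fixed $\phi$ of bounded parameter; no such estimates of the required strength are available, conditionally or not, which is why the statement remains a conjecture.
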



The first nontrivial case is $(a_1,a_2)=(1,1)$, which is the off-diagonal matrix element of $\textrm{Op}(\psi)$. Assuming the generalized Lindel\"of Hypothesis (GLH), we can prove our conjecture in this case.

For the case $(a_1,a_2)=(2,1)$, this is closely related to the cubic moment problem of Maass forms, i.e. the case $n=3$ in Conjecture \ref{conj:1}.
We prove our conjecture in this case under GLH.

\begin{theorem}\label{thm:psi f^2g}
    Let $f,g$ be two Hecke--Maass cusp forms with the spectral parameters $t_f,t_g$, normalized as in \eqref{eqn:L2normalization}.
    Fix $\psi\in \mathcal{C}_c^\infty(\Gamma\backslash \mathbb{H})$.
    Let $\varepsilon$ be a small positive number.
    When $2t_f\leq t_g-t_g^\varepsilon$, we have
    \begin{equation}
       \int_{\Gamma\backslash\mathbb{H}}
       \psi(z)f^2(z)g(z)\dd \mu z
         \ll_{\psi, A} t_g^{-A}, \quad  \textrm{for any $A>0$}.
    \end{equation}
   When $t_g-t_g^\varepsilon<2t_f$,
   assuming GLH, we have
    \begin{equation}
    \begin{split}
        \int_{\Gamma\backslash\mathbb{H}}
        \psi(z)f^2(z)g(z)\dd \mu z
        &=
        \int_{\Gamma\backslash\mathbb{H}}
        \psi(z)g(z)\dd \mu z +O_{\psi, \varepsilon}\Big(t_f^{-\frac{1}{4}+\varepsilon} (1+|2t_f-t_g|)^{-\frac{1}{4}} \Big).
        \end{split}
    \end{equation}
   Moreover, if $t_g$ goes to infinity, then we have $\langle \psi g, f^2 \rangle \sim 0$ under GLH.
\end{theorem}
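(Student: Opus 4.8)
The plan is to expand the integral spectrally and isolate the constant contribution. Since $f$ is real and normalized by \eqref{eqn:L2normalization}, we have $\langle f^2,1\rangle=\langle |f|^2,1\rangle=\vol(\Gamma\backslash\mathbb{H})$, so the projection of $f^2$ onto the constants is exactly $1$. Writing the spectral expansion of $f^2$ in an orthonormal basis consisting of the constant, the Hecke--Maass basis $\{u_j\}$, and the Eisenstein continuous spectrum, and pairing against $\psi g$ (which lies in $L^2(\Gamma\backslash\mathbb{H})$ since $\psi$ is compactly supported), the constant term reproduces the main term $\int_{\Gamma\backslash\mathbb{H}}\psi g\,\dd\mu z$. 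Thus it remains to bound
\[
  \mathcal{E}:=\sum_j\langle f^2,u_j\rangle\,\langle u_j,\psi g\rangle+\frac{1}{4\pi}\int_{\mathbb{R}}\langle f^2,E_t\rangle\,\langle E_t,\psi g\rangle\,\dd t,
\]
where $E_t=E(\cdot,\tfrac12+it)$. I will carry out the cuspidal sum; the Eisenstein integral is entirely analogous, with the factors $L(\tfrac12+it,\sym^2 f)$ and $\zeta(\tfrac12+it)$ replacing the cuspidal $L$-values.

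The two factors in $\mathcal{E}$ have complementary localization in the spectral parameter. For $\langle f^2,u_j\rangle$ I invoke Watson's triple-product formula: using $f\times f=1\oplus\sym^2 f$, the quantity $|\langle f^2,u_j\rangle|^2$ equals an explicit archimedean weight times $L(\tfrac12,u_j)\,L(\tfrac12,\sym^2 f\times u_j)$ divided by $L(1,\sym^2 f)^2 L(1,\sym^2 u_j)$. By Stirling, the completed archimedean ratio decays exponentially once $t_j>2t_f$, and for $t_j<2t_f$ reduces to a polynomially bounded weight carrying a square-root edge singularity $\asymp(2t_f-t_j)^{-1/2}$ as $t_j\uparrow 2t_f$, its total mass over $j$ being finite in accordance with Bessel's inequality $\sum_j|\langle f^2,u_j\rangle|^2\le\|f^2\|_2^2$. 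For $\langle u_j,\psi g\rangle$, spectrally expanding the fixed $\psi$ into pieces of bounded parameter turns this into genuine triple products of $g,u_j$ against low-frequency forms, which by the same archimedean principle decay super-polynomially once $|t_j-t_g|\gg t_g^\varepsilon$. Hence the first assertion follows: if $2t_f\le t_g-t_g^\varepsilon$, every surviving $u_j$ has $t_j-2t_f\gg t_g^\varepsilon$, so $|\langle f^2,u_j\rangle|\ll e^{-ct_g^\varepsilon}$; with convexity bounds (no GLH) controlling the remaining polynomial factors, $\mathcal{E}\ll_{\psi,A}t_g^{-A}$.

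For the second assertion, assume $t_g-t_g^\varepsilon<2t_f$, so the localizations overlap. After truncating to $|t_j-t_g|\le t_g^\varepsilon$ with negligible tail, I apply Cauchy--Schwarz,
\[
  |\mathcal{E}|\ll\Big(\sum_{|t_j-t_g|\le t_g^\varepsilon}|\langle f^2,u_j\rangle|^2\Big)^{1/2}\Big(\sum_{|t_j-t_g|\le t_g^\varepsilon}|\langle u_j,\psi g\rangle|^2\Big)^{1/2}+(\text{Eisenstein}).
\]
The second factor is $O_\psi(1)$ by Bessel's inequality, $\sum_j|\langle u_j,\psi g\rangle|^2\le\|\psi g\|_2^2\ll_\psi 1$. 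For the first factor I insert Watson's formula, bound $L(\tfrac12,u_j)L(\tfrac12,\sym^2 f\times u_j)\ll t^\varepsilon$ by GLH, and sum the archimedean weight over the $\asymp t_f\,t_g^\varepsilon$ forms in the window by Weyl's law; the edge factor contributes $(1+|2t_f-t_g|)^{-1/2}$ and the overall size is $\ll t_f^{-1/2+\varepsilon}(1+|2t_f-t_g|)^{-1/2}$. Taking the square root gives $|\mathcal{E}|\ll t_f^{-1/4+\varepsilon}(1+|2t_f-t_g|)^{-1/4}$, as claimed. The hard part is exactly this first factor: one must establish the sharp, uniform archimedean asymptotics of the triple-product weight through the transition $t_j\uparrow 2t_f$, since it is the square-root edge singularity that produces the exponent $-\tfrac14$; GLH enters only to strip the central $L$-values (and a spectral large-sieve mean value could in principle replace the pointwise GLH input).

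Finally, for the limiting statement, if $t_g\to\infty$ then either the first case applies and $\mathcal{E}\ll_{\psi,A}t_g^{-A}\to 0$, or the second applies, in which case $2t_f>t_g-t_g^\varepsilon\to\infty$ forces $t_f\to\infty$ and the error $t_f^{-1/4+\varepsilon}(1+|2t_f-t_g|)^{-1/4}\to 0$. The main term itself also vanishes: writing $g=u_{j_g}$ and using $\Delta g=(\tfrac14+t_g^2)g$ together with $\psi\in\mathcal{C}_c^\infty\subset\operatorname{dom}\Delta^k$, self-adjointness gives $\int_{\Gamma\backslash\mathbb{H}}\psi g\,\dd\mu z=\langle\psi,g\rangle=(\tfrac14+t_g^2)^{-k}\langle\Delta^k\psi,g\rangle\ll_{\psi,A}t_g^{-A}$ for every $A$. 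Hence $\langle\psi g,f^2\rangle=\int_{\Gamma\backslash\mathbb{H}}\psi g\,\dd\mu z+\mathcal{E}\to 0$, matching the conjectural value $c_2c_1\int_{\Gamma\backslash\mathbb{H}}\psi\,\dd\mu z=0$.
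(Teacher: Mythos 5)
Your proposal is correct and arrives at the paper's error term, but it reorganizes the argument in a genuinely different way, so a comparison is worthwhile. The paper expands the fixed observable $\psi$ first (only components with $t_k\ll\max(t_f,t_g)^{o(1)}$ survive, by rapid decay of $\langle\psi,u_k\rangle$), and then for each such $u_k$ or $E_t$ runs a second Parseval expansion of $\langle u_kg,f^2\rangle$, bounding every term $\langle u_kg,u_j\rangle\langle u_j,f^2\rangle$ individually via Watson/Rankin--Selberg, the explicit Stirling exponent $Q_1$ of \eqref{eqn:Q1}, and GLH or convexity for all $L$-values involved, including the triple products $L(1/2,u_k\times g\times u_j)$; in that treatment the main term has to be recovered from the diagonal component $u_k=\sqrt{3/\pi}\,g$ paired with the constant function inside the inner expansion (Lemma \ref{lemma:ef^2g}). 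You instead expand $f^2$ first, so the constant projection $\langle f^2,1\rangle/\vol(\Gamma\backslash\mathbb{H})=1$ yields the main term $\int\psi g\,\dd\mu z$ at once and uniformly in all ranges, and in the critical window around $t_g$ you apply Cauchy--Schwarz together with Bessel's inequality $\sum_j|\langle u_j,\psi g\rangle|^2\le\|\psi g\|_2^2\ll_\psi 1$, so that no bound on triple-product central values is needed there: GLH enters only through $L(1/2,u_j)L(1/2,\sym^2f\times u_j)$ inside $\langle f^2,u_j\rangle$, while all $g$- and $\psi$-dependence in the second factor is absorbed unconditionally. What the paper's termwise route buys is the individual off-diagonal estimates of Lemma \ref{lemma:ef^2g}, of independent interest, and a fully explicit case analysis; what yours buys is a cleaner extraction of the main term and fewer conditional inputs in the window, with the source of the exponent $-\tfrac14$ (the edge behavior of the archimedean weight at $t_j=2t_f$) laid bare. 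Two points need tightening. First, the localization window must have width $t_g^{\delta}$ with $\delta<\varepsilon$ fixed (the paper uses $t_g^{o(1)}$): if you truncate at width $t_g^{\varepsilon}$ itself, then in the first case a surviving $t_j$ could satisfy $t_j-2t_f=0$ rather than $t_j-2t_f\gg t_g^{\varepsilon}$, destroying the exponential gain. Second, in the first case the conclusion $\ll_{\psi,A}t_g^{-A}$ concerns the full integral, not just $\mathcal{E}$, so you must also invoke the rapid decay $\int\psi g\,\dd\mu z\ll_{\psi,A}t_g^{-A}$ there; you prove this only in your final paragraph, so it should be cited at that earlier point as well.
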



\begin{remark}
  By the method in Huang \cite{Huang}, one can at least deal with the case when $t_{g} - t_{f} = O(1)$ unconditionally.
  It will be nice to work out what exactly one can prove unconditionally in this problem.
  Note that in Theorem \ref{thm:psi f^2g}, when $t_g-2t_f =O(1)$, the upper bounds are weaker than other cases, due to the conductor drop phenomenon.
\end{remark}

It is interesting to see whether one can obtain the conjectured main term in the asymptotic formula in Conjecture \ref{conj:2} when it does not vanish.
The simplest case is when $J=2$ and $(a_1,a_2)=(2,2)$.
But it seems too hard to deal with an observable $\psi\in \mathcal{C}_c^\infty(\Gamma\backslash\mathbb{H})$.
For the case $(a_1,a_2)=(2,2)$ and $\psi(z)=1$ for all $z\in \mathbb{H}$, this is closely related to the $L^4$-norm problem of Maass forms. We are able to prove the following theorem, which only needs the larger spectral parameter to tend to infinity.

\begin{theorem}\label{thm:22}
   Assume the generalized Riemann Hypothesis (GRH) and the generalized Ramanujan Conjecture (GRC).
 Let $f,g$ be two orthogonal Hecke--Maass cusp forms with the spectral parameters $t_f,t_g$, normalized as in \eqref{eqn:L2normalization}.
  Then we have
  \[
    \frac{1}{\vol(\Gamma\backslash \mathbb{H})} \int_{\Gamma\backslash \mathbb{H}} f(z)^2 g(z)^2 \dd\mu z  = 1 + O_{\varepsilon}\big( (\log (t_f + t_g))^{-1/4+\varepsilon} \big)
  \]
  as $\max(t_f,t_g)$ goes to infinity. 
\end{theorem}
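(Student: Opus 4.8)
The plan is to compute the integral by Parseval's identity for the spectral decomposition of $L^2(\Gamma\backslash\mathbb{H})$ and to extract the main term from the constant eigenfunction. Writing $\int_{\Gamma\backslash\mathbb{H}}f^2g^2\,\dd\mu z=\langle f^2,g^2\rangle$ and expanding $g^2$ along the orthonormal basis consisting of the constant $\vol(\Gamma\backslash\mathbb{H})^{-1/2}$, the Hecke--Maass forms $\{u_j\}$, and the Eisenstein series $E(\cdot,1/2+it)$, the normalization \eqref{eqn:L2normalization} gives $\langle f^2,1\rangle=\langle g^2,1\rangle=\vol(\Gamma\backslash\mathbb{H})$, so the projection of $g^2$ onto the constants is identically $1$ and contributes exactly $\vol(\Gamma\backslash\mathbb{H})$. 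Since $f,g,u_j$ are real-valued, this yields
\[
\frac{1}{\vol(\Gamma\backslash\mathbb{H})}\int_{\Gamma\backslash\mathbb{H}}f^2g^2\,\dd\mu z=1+\frac{1}{\vol(\Gamma\backslash\mathbb{H})}\Bigg(\sum_j\langle f^2,u_j\rangle\langle g^2,u_j\rangle+\frac{1}{4\pi}\int_{\mathbb{R}}\langle f^2,E_t\rangle\overline{\langle g^2,E_t\rangle}\,\dd t\Bigg),
\]
where $E_t=E(\cdot,1/2+it)$. The main term $1$ is thus free, and the whole content of the theorem is that the remaining discrete and continuous \emph{cross terms} are $O((\log(t_f+t_g))^{-1/4+\varepsilon})$; this is exactly the assertion that the mean-zero parts $f^2-1$ and $g^2-1$ are asymptotically uncorrelated, i.e.\ statistical independence. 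A plain Cauchy--Schwarz is useless here, since each of $\sum_j\langle f^2,u_j\rangle^2$ and $\sum_j\langle g^2,u_j\rangle^2$ is of size $\asymp\vol(\Gamma\backslash\mathbb{H})$ (these are governed by the $L^4$-norms of $f$ and $g$), so the decorrelation must be used genuinely.

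To evaluate the cross terms I would invoke Watson's triple product formula and Rankin--Selberg unfolding. Watson expresses $\langle f^2,u_j\rangle^2$ through $L(1/2,u_j)L(1/2,u_j\times\Sym^2 f)/(L(1,\Sym^2 u_j)L(1,\Sym^2 f)^2)$ times an explicit archimedean weight built from the Gamma factors of the triple product, and similarly for $g$; the continuous coefficient $\langle f^2,E_{1/2+it}\rangle$ unfolds to $\zeta(1/2+it)L(1/2+it,\Sym^2 f)/\zeta(1+2it)$ with its own archimedean weight. By Stirling these weights decay super-polynomially once the parameter of $u_j$ (resp.\ $t$) exceeds $2t_f$, resp.\ $2t_g$, and they concentrate with a square-root singularity (the conductor drop) at the transition points $t_j\approx 2t_f$ and $t_j\approx 2t_g$. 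All central and edge $L$-values are then bounded by $(\text{conductor})^{\varepsilon}$ using GRH (Lindel\"of), the symmetric-square values at $s=1$ are bounded below by $(\log)^{-\varepsilon}$ again under GRH, and GRC controls the Dirichlet coefficients in the approximate functional equations and the arithmetic Rankin--Selberg factors.

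Two features then produce the saving. When $|t_f-t_g|$ exceeds the width of the conductor-drop transition, the ranges in which $\langle f^2,u_j\rangle$ and $\langle g^2,u_j\rangle$ are individually large are essentially disjoint, and a Cauchy--Schwarz weighted by the ratio of the two archimedean factors already gives a power saving. For the uniform bound the decisive input is orthogonality: $\langle f,g\rangle=0$ forces the Rankin--Selberg $L$-function $L(s,f\times g)$ to be entire, whereas $L(s,f\times f)=\zeta(s)L(s,\Sym^2 f)$ has a pole at $s=1$. That pole is precisely the extra correlation that turns the $L^4$-answer into the Gaussian fourth moment, and its absence here is what makes the main term factorize as $1=1\cdot1$. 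Concretely, to handle the overlapping range $t_j\lesssim 2\min(t_f,t_g)$ --- and in particular the case $t_f\approx t_g$, which cannot be excluded since simplicity of the Laplace spectrum is unknown --- I would pass to the alternative Parseval identity $\int_{\Gamma\backslash\mathbb{H}}f^2g^2\,\dd\mu z=\sum_j|\langle fg,u_j\rangle|^2+\text{(continuous)}$, whose constant term vanishes because $\langle fg,1\rangle=\langle f,g\rangle=0$. Here Watson gives the honest first power $L(1/2,f\times g\times u_j)$, and the spectral sum becomes a first moment of triple product $L$-functions that can be opened by an approximate functional equation and evaluated by the Kuznetsov/Petersson trace formula; the diagonal reproduces the main term $\vol(\Gamma\backslash\mathbb{H})$, while the holomorphy of $L(s,f\times g)$ guarantees that no spurious secondary main term appears.

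The hard part will be the off-diagonal of this trace formula together with the conductor-drop edge $t_j\approx 2t_f\approx 2t_g$, where the archimedean weight concentrates and individual $L$-values may spike; it is the contribution of this short edge range, controlled by a GRH second-moment estimate for the relevant $L$-functions followed by one Cauchy--Schwarz, that degrades the expected power saving to the logarithmic saving of exponent $1/4$. In the complementary and easier regime where $t_f$ stays bounded while $t_g\to\infty$, the spectral and continuous sums are of bounded length and the statement reduces to effective quantum unique ergodicity for $g$ tested against the fixed, rapidly decaying function $f^2$, for which GRH supplies at least the stated decay; matching the two regimes gives the uniform error term $O((\log(t_f+t_g))^{-1/4+\varepsilon})$.
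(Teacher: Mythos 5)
Your setup coincides with the paper's: Parseval for $\langle f^2,g^2\rangle$, the main term coming for free from the constant eigenfunction, Watson's formula plus Stirling for the spectral coefficients, and the correct observations that a plain Cauchy--Schwarz only gives $O(1)$ and that pointwise Lindel\"of bounds suffice except when $t_f$ and $t_g$ are of comparable size. The genuine gap is in that critical range $t_f\asymp t_g$. There you abandon the $\langle f^2,g^2\rangle$ expansion and propose to evaluate $\sum_j|\langle fg,u_j\rangle|^2$ as a first moment of triple product $L$-functions $L(1/2,f\times g\times u_j)$ via an approximate functional equation and the Kuznetsov formula. This is precisely the route the paper describes and then discards in its introduction: for this first moment (degree-eight $L$-functions, spectral length $\asymp t_f$, conductor drop at the edge) the off-diagonal is not known to be controllable, and the authors state explicitly that they do not know how to make use of GRH and GRC in that approach; it is at least as hard as Buttcane--Khan's $L^4$ analysis. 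Writing that ``the hard part will be the off-diagonal'' names the obstruction but supplies no idea for overcoming it, so your proof does not close in exactly the regime where the theorem is hardest.

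Moreover, your proposed source of the exponent $1/4$ (a GRH second-moment estimate on the short conductor-drop edge followed by one Cauchy--Schwarz) is not a coherent mechanism and would not produce a logarithmic saving of that shape. The paper handles the range $t_g^{1-5\varepsilon}\le t_f\le t_g$ by staying with the absolute-value bound
\[
\sum_{j\ge1} |\langle f^2,u_j\rangle\langle u_j,g^2\rangle| \ll \frac{(\log\log t_g)^3}{(t_ft_g)^{1/4}}\sum_{t_j\le 2t_f+t_g^\varepsilon}\frac{L(1/2,u_j)\,L(1/2,\sym^2 f\times u_j)^{1/2}\,L(1/2,\sym^2 g\times u_j)^{1/2}}{|t_j|\,(1+|t_j-2t_f|)^{1/4}(1+|t_j-2t_g|)^{1/4}},
\]
and proving a sharp upper bound for this mixed \emph{fractional} moment (Proposition \ref{prop:mixedmonents1}) by Soundararajan's method under GRH and GRC: a moment with exponent $\ell$ costs $(\log X)^{\ell(\ell-1)/2}$ per factor, so $\ell_1=1$, $\ell_2=\ell_3=1/2$ yields the saving $(\log X)^{-1/8-1/8+\varepsilon}=(\log X)^{-1/4+\varepsilon}$ relative to the eigenvalue count; that is where the exponent $1/4$ actually comes from, and this idea is absent from your proposal. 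Note also where orthogonality truly enters in the paper's argument: $f\neq g$ gives $\sym^2 f\ncong\sym^2 g$ (Gelbart--Jacquet, Ramakrishnan), whence $\sum_{p\le x} \lambda_{\sym^2 f}(p)\lambda_{\sym^2 g}(p)/p = O(\log\log\log(t_f+t_g))$, which keeps the variance in the moment computation at $(\ell_1^2+\ell_2^2+\ell_3^2)\log\log$. Your appeal to the holomorphy of $L(s,f\times g)$ is the right underlying phenomenon, but you attach it to a diagonal-versus-off-diagonal analysis of a trace formula that is never carried out.
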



\begin{remark}
  Our method can deal with $\langle f_1f_2, f_3f_4 \rangle$ whenever not all of $f_i$'s are the same.
  As in Huang \cite{Huang2024}, Theorem \ref{thm:22} has an application to the non-vanishing problem of the triple product $L$-functions $L(1/2,f\times g\times u_j)$.
\end{remark}

To estimate $\int_{\Gamma\backslash \mathbb{H}} f(z)^2 g(z)^2 \dd\mu z$, it is natural to view it as $\langle fg,fg\rangle$. By Parseval's identity, we have
\[
  \langle fg,fg\rangle = \sum_{j\geq1} |\langle fg,u_j\rangle|^2 + \frac{1}{4\pi} \int_{\mathbb{R}} |\langle fg,E(\cdot,1/2+it)\rangle|^2 \dd t  ,
\]
where $E(z,1/2+it)$ is the Eisenstein series. Here the contribution from the constant function vanishes since $\langle f,g\rangle=0$.
It is not hard to see the contribution from the Eisenstein series will be small if we assume GRH. Then we want to show that the contribution form the cusp forms will be $1+o(1)$. To do this, one may use the Watson's formula to reduce this to the first moment of the triple product $L$-functions $L(1/2,f\times g\times u_j)$.
Bounding this effectively is still hard in our problem even if we assume GRH and GRC.
This may be more complicated than Buttcane and Khan's work \cite{ButtcaneKhan2017}, where they  deal with the first moment of $L(1/2, u_j) L(1/2,\sym^2 f \times u_j)$ under GLH. However, we can get $1+o(1)$ heuristically. One good thing in this approach is that we obtain an identity between $\langle fg,fg\rangle$  and the first moment of $L$-functions.
But we do not know how to make use of GRH and GRC.

To prove Theorem \ref{thm:22},
our novelty is to view $\int_{\Gamma\backslash \mathbb{H}} f(z)^2 g(z)^2 \dd\mu z$ as $\langle f^2, g^2 \rangle$.
By Parseval's identity, we have
\[
  \frac{\langle f^2,g^2\rangle}{\vol(\Gamma\backslash \mathbb{H})} = 1 + O\Big(\sum_{j\geq1} |\langle f^2,u_j\rangle \langle g^2,u_j\rangle|  + \int_{\mathbb{R}} |\langle f^2,E(\cdot,1/2+it)\rangle \langle g^2,E(\cdot,1/2+it)\rangle| \dd t \Big ).
\]
Since we do not know the explicit formula of $\langle f^2,u_j\rangle$ in terms of $L$-functions, we take the absolute values. We lose a lot from this step. But we already see the main term. So we only need to show that the contributions from the cusp forms and Eisenstein series should be small. For this, we will use Watson's formula to reduce the problem  to bounding fractional moments of $L$-functions, and then apply Soundararajan's method  in \cite{soundararajan2009moments} to deal with the moments under GRH and GRC.


In the end of this introduction, we mention that a statistical independence result of Hecke eigenfunctions has been proved in Kurlberg and Rudnick \cite{KR01b} for a well studied model in quantum chaos -- the quantization of a hyperbolic linear map of the torus named ``quantum cat map''.

\medskip
\textbf{Structure of this paper.}
The rest of this paper is organized as follows.
In \S \ref{sec:21}, we prove Theorem \ref{thm:psi f^2g} by using the theory of $L$-functions.
In \S \ref{sec:22}, we prove Theorem \ref{thm:22} by assuming Proposition \ref{prop:mixedmonents1} on moments of $L$-functions.
In \S \ref{sec:fractional-moment}, a variant of Soundararajan's method is applied to give an almost sharp upper bound of certain fractional moment of $L$-functions, which proves Proposition \ref{prop:mixedmonents1}.

\section{Proof of Theorem \ref{thm:psi f^2g}} \label{sec:21}

In this section, we prove Theorem \ref{thm:psi f^2g}.
By Selberg's spectral decomposition of $L^2(\Gamma\backslash\mathbb{H})$, we know that to consider a fixed $\psi\in \mathcal C_c^\infty(\Gamma\backslash\mathbb{H})$ we need to deal with the constant function, the Hecke--Maass cusp forms, and the Eisenstein series.
We need the following lemmas.
\begin{lemma}\label{lemma: <f^2, g>}
   Assume GLH.
 Let $f,g$ be the same as in Theorem \ref{thm:psi f^2g}.
Then when $t_g\geq 2t_f$, we have
   \begin{equation}\label{eqn:f^2g1a}
        \int_{\Gamma\backslash\mathbb{H}}
        f^2(z)g(z)\dd \mu z \ll {t_g}^{-\frac{3}{4}+o(1)}
        (t_g-2t_f+1)^{-\frac{1}{4}}
        e^{-\frac{\pi}{2}(t_g-2t_f)};
    \end{equation}
    and when $t_g\leq 2t_f$, we have
  \begin{equation}\label{eqn:f^2g1b}
        \int_{\Gamma\backslash\mathbb{H}}
        f^2(z)g(z)\dd \mu z \ll {t_g}^{-\frac{1}{2}}
        {t_f}^{-\frac{1}{4}+o(1)}
        (2t_f-t_g+1)^{-\frac{1}{4}}.
    \end{equation}
  Furthermore, if $2t_f\leq t_g-t_g^\varepsilon$, then we have
$\langle 1, f^2g\rangle\ll  e^{-\frac{\pi}{2}(t_g-2t_f)+t_g^{o(1)}}$ unconditionally.
\end{lemma}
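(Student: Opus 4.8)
The plan is to express the inner product $\langle f^2, g\rangle$ via the spectral/unfolding machinery and relate it to a product of $L$-values, then extract the analytic behavior in the spectral parameters. The central quantity is $\int_{\Gamma\backslash\mathbb{H}} f(z)^2 g(z)\,\dd\mu z$, which by Watson's triple product formula equals (up to explicit archimedean and arithmetic factors) the square root of a triple product central $L$-value. Concretely, I would write
\[
  \Big| \int_{\Gamma\backslash\mathbb{H}} f^2 g \,\dd\mu z \Big|^2
  = \frac{\Lambda(1/2, f\times f\times g)}{\Lambda(1, \mathrm{sym}^2 f)^2\,\Lambda(1,\mathrm{sym}^2 g)} \cdot (\text{normalization}),
\]
so that after removing the completed-$L$-function normalizations the size is governed by the central value $L(1/2, f\times f\times g)$ together with the gamma factors from the archimedean place.

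First I would isolate the archimedean factor. Since $f$ has spectral parameter $t_f$ and $g$ has $t_g$, the relevant $\Gamma$-factors involve combinations like $\Gamma(\frac{s + i(2t_f \pm t_g)}{2})$ and $\Gamma(\frac{s + i t_g}{2})$ evaluated near $s = 1/2$. By Stirling's formula, the product of these gamma factors produces the exponential $e^{-\frac{\pi}{2}(t_g - 2t_f)}$ when $t_g > 2t_f$ (this is precisely the conductor-drop / forbidden-region decay), together with the polynomial prefactors $(t_g - 2t_f + 1)^{-1/4}$ and the overall $t_g^{-3/4}$ or $t_g^{-1/2} t_f^{-1/4}$ depending on the regime. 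The split into the two cases $t_g \geq 2t_f$ versus $t_g \leq 2t_f$ reflects whether the spectral parameter $t_g$ lies outside or inside the ``bulk'' $[0, 2t_f]$ dictated by the $f\times f$ factor: outside, the archimedean transform is exponentially small; inside, it is oscillatory and only polynomially small.

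Second, I would bound the central $L$-value $L(1/2, f\times f\times g)$ under GLH, which gives $L(1/2, f\times f\times g) \ll (t_f t_g)^{o(1)}$ (more precisely a conductor-to-the-$\varepsilon$ bound in the analytic conductor of the triple product). Combined with the lower bounds $\Lambda(1,\mathrm{sym}^2 f), \Lambda(1,\mathrm{sym}^2 g) \gg t^{-o(1)}$ — also available under GLH, or by standard zero-free-region arguments — this controls the arithmetic part up to $t^{o(1)}$, and the stated bounds follow by combining with the Stirling analysis of the first step.

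The main obstacle I expect is the careful Stirling asymptotics of the product of gamma factors in the transitional regime where $t_g - 2t_f$ is neither large nor $O(1)$: one must track the exponential rate $e^{-\frac{\pi}{2}(t_g - 2t_f)}$ and the polynomial factor $(t_g - 2t_f + 1)^{-1/4}$ simultaneously and uniformly, and the saddle-point or stationary-phase location shifts as $t_g$ crosses $2t_f$. The unconditional claim $\langle 1, f^2 g\rangle \ll e^{-\frac{\pi}{2}(t_g - 2t_f) + t_g^{o(1)}}$ for $2t_f \leq t_g - t_g^\varepsilon$ should then follow without GLH because the constant-function projection only requires the unfolded integral $\int_0^\infty (\text{product of Whittaker/Bessel-type factors})$, whose exponential decay comes purely from the archimedean transform; here one replaces the GLH bound on the $L$-value by the trivial convexity bound, which is absorbed into the $t_g^{o(1)}$ since the dominant term is the exponential from the gamma factors.
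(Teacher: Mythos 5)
Your proposal matches the paper's proof in all essentials: the paper likewise applies Watson's formula (using the factorization $\Lambda(1/2,f\times f\times g)=\Lambda(1/2,\sym^2 f\times g)\Lambda(1/2,g)$ and Lapid's nonnegativity to take square roots), extracts the exponential factor $\exp\bigl(-\tfrac{\pi}{2}(|t_f+\tfrac{t_g}{2}|+|t_f-\tfrac{t_g}{2}|-2t_f)\bigr)$ and the polynomial factors $t_g^{-1/2}\prod_{\pm}(1+|t_g\pm 2t_f|)^{-1/4}$ via Stirling, bounds the central values by GLH in both regimes, and obtains the unconditional estimate for $2t_f\leq t_g-t_g^{\varepsilon}$ by substituting the convexity bound, which is absorbed into $e^{t_g^{o(1)}}$ against the exponential decay. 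The only cosmetic difference is that you work with the triple product $L$-value directly rather than through its explicit factorization into $\sym^2 f\times g$ and $g$, which changes nothing in the argument.
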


\begin{lemma}\label{lemma:ef^2g}
   Let $f,g$ be the same as in Theorem \ref{thm:psi f^2g}.
   Let $u_k$ be a Hecke--Maass cusp form  with the spectral parameter $t_k>1$, and
    $E_t(z):=E(z,1/2+it)$ the Eisenstein series with $t\in \mathbb{R}$.
    Assume  $t_k\ll \max(t_f, t_g)^{o(1)}$ and $t\ll \max(t_f, t_g)^{o(1)}$.
    When $t_g-t_g^{\varepsilon}\geq 2t_f$, we have
    \begin{equation}\label{eqn:ukf2g1}
      \int_{\Gamma\backslash\mathbb{H}}
      u_k(z)f^2(z)g(z)\dd \mu z\ll
      e^{-\frac{\pi}{2}(t_g-2t_f)+t_g^{o(1)}},
    \end{equation}
    and
    \begin{equation}\label{eqn:etf2g1}
        \int_{\Gamma\backslash\mathbb{H}}
        E_t(z)f^2(z)g(z)\dd \mu z\ll
        e^{-\frac{\pi}{2}(t_g-2t_f)+t_g^{o(1)}}.
    \end{equation}
    When $t_g-t_g^{\varepsilon}\leq 2t_f$, by assuming GLH, we have
    \begin{equation}\label{eqn:ukf2g2}
      \int_{\Gamma\backslash\mathbb{H}}
      u_k(z)f^2(z)g(z)\dd \mu z
       =
       \langle u_kg, 1\rangle
       +O\Big(t_f^{-\frac{1}{4}+o(1)} (1+|2t_f-t_g|)^{-\frac{1}{4}}\Big),
    \end{equation}
    and
    \begin{equation}\label{eqn:etf2g2}
        \int_{\Gamma\backslash\mathbb{H}}
        E_t(z)f^2(z)g(z)\dd \mu z
        \ll
        t_f^{-\frac{1}{4}+o(1)} (1+|2t_f-t_g|)^{-\frac{1}{4}}.
    \end{equation}
\end{lemma}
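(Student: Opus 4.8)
The plan is to reduce both integrals to the triple‑product bounds of Lemma \ref{lemma: <f^2, g>} by spectrally expanding $f^2$. Since $f$ is $L^2$‑normalized, the projection of $f^2$ onto the constant function is exactly $\langle f^2,1\rangle/\vol(\Gamma\backslash\mathbb{H})=1$, so that
\begin{equation*}
f(z)^2 = 1 + \sum_{j\ge1}\langle f^2,u_j\rangle\,u_j(z) + \frac{1}{4\pi}\int_{\mathbb{R}}\langle f^2,E_s\rangle\,E_s(z)\,\dd s,
\end{equation*}
where $\{u_j\}$ is an orthonormal basis of Hecke--Maass cusp forms and $E_s:=E(\cdot,1/2+is)$. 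Pairing against $u_kg$ (resp. $E_tg$) and using that all forms are real, I obtain
\begin{equation*}
\int_{\Gamma\backslash\mathbb{H}} u_kf^2g\,\dd\mu z = \langle u_kg,1\rangle + \sum_{j\ge1}\langle f^2,u_j\rangle\langle u_kg,u_j\rangle + \frac{1}{4\pi}\int_{\mathbb{R}}\langle f^2,E_s\rangle\langle u_kg,E_s\rangle\,\dd s.
\end{equation*}
The constant term produces the main term $\langle u_kg,1\rangle$ of \eqref{eqn:ukf2g2}; for the Eisenstein test function $E_t$ the corresponding main term $\langle E_tg,1\rangle=\int E_tg\,\dd\mu z$ vanishes by cuspidality of $g$ (zero constant term), matching the absence of a main term in \eqref{eqn:etf2g2}. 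It then remains to bound the cuspidal and continuous error sums.

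For the cuspidal error I treat each coefficient as a triple product. The factor $\langle f^2,u_j\rangle=\int f^2u_j$ is exactly the period of Lemma \ref{lemma: <f^2, g>} with $g$ replaced by $u_j$, so I may insert the bounds \eqref{eqn:f^2g1a}--\eqref{eqn:f^2g1b} with $t_g$ replaced by $t_j$. The second factor $\langle u_kg,u_j\rangle=\int u_kgu_j$ is a triple product with spectral parameters $(t_k,t_g,t_j)$; since $t_k=\max(t_f,t_g)^{o(1)}$, the archimedean factor in Watson's formula is exponentially small unless $|t_j-t_g|\le t_k+O(\log t_g)$, so up to a negligible tail the sum localizes to a window $t_j\in[t_g-t_k,t_g+t_k]$ of length $t_g^{o(1)}$. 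On this window I estimate $\sum_{t_j}|\langle u_kg,u_j\rangle|$ by Cauchy--Schwarz: the number of $t_j$ is $\ll t_gt_k\ll t_g^{1+o(1)}$ by Weyl's law, while Bessel's inequality gives $\sum_j|\langle u_kg,u_j\rangle|^2\le\|u_kg\|_2^2\le\|u_k\|_\infty^2\|g\|_2^2\ll t_g^{o(1)}$. Hence $\sum_{t_j\approx t_g}|\langle u_kg,u_j\rangle|\ll t_g^{1/2+o(1)}$.

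Combining these pieces yields the two regimes. When $t_g-t_g^{\varepsilon}\ge 2t_f$, every $t_j$ in the window satisfies $t_j>2t_f$, and the same triple‑product computation that gives \eqref{eqn:f^2g1a}, now using the convexity bound in place of GLH (the exponential factor dominating), yields $\langle f^2,u_j\rangle\ll e^{-\frac{\pi}{2}(t_g-2t_f)+t_g^{o(1)}}$ unconditionally; multiplying by $\sum|\langle u_kg,u_j\rangle|\ll t_g^{1/2+o(1)}$ is absorbed into the exponential and proves \eqref{eqn:ukf2g1}, with \eqref{eqn:etf2g1} identical. When $t_g-t_g^{\varepsilon}\le 2t_f$, I use \eqref{eqn:f^2g1b}, so $\max_{t_j\approx t_g}|\langle f^2,u_j\rangle|\ll t_g^{-1/2}t_f^{-1/4+o(1)}(1+|2t_f-t_g|)^{-1/4}$; multiplying by $t_g^{1/2+o(1)}$ cancels the factor $t_g^{-1/2}$ and gives the claimed error $t_f^{-1/4+o(1)}(1+|2t_f-t_g|)^{-1/4}$, proving \eqref{eqn:ukf2g2}. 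The continuous‑spectrum error is handled the same way, with $\langle u_kg,E_s\rangle$ localizing $s\approx t_g$, with $\langle f^2,E_s\rangle$ estimated by the Eisenstein analogue of Lemma \ref{lemma: <f^2, g>}, and with $\int_{\mathbb{R}}|\langle u_kg,E_s\rangle|^2\,\dd s\le\|u_kg\|_2^2$ replacing Bessel's inequality; the $E_t$ test function case \eqref{eqn:etf2g2} is identical.

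The main obstacle is the conductor‑drop range $t_g\approx 2t_f$, where the triangle‑inequality singularity $(1+|2t_f-t_g|)^{-1/4}$ of the archimedean factor is largest and where GLH is genuinely needed to control $\langle f^2,u_j\rangle$; this is precisely the content imported from Lemma \ref{lemma: <f^2, g>}. The one new point is to check that the crude Cauchy--Schwarz over the spectral window $t_j\approx t_g$ loses nothing in this factor, which holds because the window has width $t_g^{o(1)}$, so that $(1+|2t_f-t_j|)^{-1/4}$ is constant across it up to $t_g^{o(1)}$.
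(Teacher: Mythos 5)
Your overall architecture coincides with the paper's: both proofs expand $f^2$ by Parseval (the paper's \eqref{eqn:Parseval_ugf^2}--\eqref{eqn:Parseval_Egf^2}), extract the main term $\langle u_kg,1\rangle$ from the constant eigenfunction (vanishing for $E_t$), and reduce to bounding $\sum_j\langle u_kg,u_j\rangle\langle u_j,f^2\rangle$ and its continuous analogue. Where you genuinely diverge is the error estimate. The paper applies Watson's formula to \emph{both} factors, invokes GLH for all resulting central values --- including the triple product $L(1/2,u_k\times g\times u_j)$ --- and runs a case analysis of the combined archimedean exponent $Q_1$ in \eqref{eqn:Q1_cases(tf_small)}--\eqref{eqn:Q1_cases(tf_large)} over the whole range $2t_f-t_g^{o(1)}\leq t_j\leq t_g+t_g^{o(1)}$. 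You instead localize $t_j$ near $t_g$ using only the decay of $\langle u_kg,u_j\rangle$, bound $\sum_{t_j\approx t_g}|\langle u_kg,u_j\rangle|\ll t_g^{1/2+o(1)}$ unconditionally via Cauchy--Schwarz, Bessel and Weyl's law, and import the bound on $\langle f^2,u_j\rangle$ from Lemma \ref{lemma: <f^2, g>}. In the regime $t_g-t_g^\varepsilon\leq 2t_f$ this is correct and in fact leaner: GLH enters only through Lemma \ref{lemma: <f^2, g>} (for $L(1/2,u_j)$ and $L(1/2,\sym^2 f\times u_j)$) and not for the triple product $L$-function, and your observation that $(1+|2t_f-t_j|)^{-1/4}\asymp t_g^{o(1)}(1+|2t_f-t_g|)^{-1/4}$ across the $t_g^{o(1)}$-wide window is exactly what makes the bookkeeping close; the numerology reproduces \eqref{eqn:ukf2g2} and \eqref{eqn:etf2g2}.

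There is, however, a genuine gap in the first regime $2t_f\leq t_g-t_g^\varepsilon$. The target bound $e^{-\frac{\pi}{2}(t_g-2t_f)+t_g^{o(1)}}$ is exponentially small --- smaller than $t_g^{-A}$ for every fixed $A$, since $t_g-2t_f\geq t_g^{\varepsilon}\gg\log t_g$ --- but your localization step justifies discarding the tail $|t_j-t_g|>t_k+O(\log t_g)$ only by the exponential smallness of the single factor $\langle u_kg,u_j\rangle$ against polynomial bounds on everything else, which yields a tail bound of the form $t_g^{-A}$, \emph{not} $e^{-\frac{\pi}{2}(t_g-2t_f)}$. Concretely, for $t_j\leq 2t_f$ the factor $\langle f^2,u_j\rangle$ has no exponential decay at all, and the terms with $t_j$ just below $2t_f$ (which lie well outside your window, since the window has width $\ll t_g^{o(1)}<t_g^{\varepsilon}$) have size about $e^{-\frac{\pi}{2}(t_g-t_k-2t_f)}$ times polynomial factors --- the same order as the bound \eqref{eqn:ukf2g1} you are proving, hence not negligible relative to your window contribution. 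The repair is to track the exponents of \emph{both} factors throughout the tail: the product decays like $e^{-\frac{\pi}{2}\left[\max(0,|t_j-t_g|-t_k)+\max(0,\,t_j-2t_f)\right]}$, whose exponent is $\geq t_g-2t_f-t_k$ for every $t_j$, so the full sum (over polynomially many terms) still meets the target. This is precisely the content of the paper's analysis via \eqref{eqn:bound_cusp} and \eqref{eqn:Q1_cases(tf_small)}; once inserted, your argument for case 1 is complete. (In the second regime no repair is needed, since there a $t_g^{-A}$ tail really is negligible against the polynomial target.)
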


Now we are ready to prove Theorem \ref{thm:psi f^2g} by using Lemmas \ref{lemma: <f^2, g>} and \ref{lemma:ef^2g}.

\begin{proof}[Proof of Theorem \ref{thm:psi f^2g}]
By using Parseval's formula, we have
\begin{multline}\label{eqn:Parseval_psi f^2g}
    \int_{\Gamma\backslash\mathbb{H}}
    \psi(z)f^2(z)g(z)\dd \mu z=
    \langle \psi , \frac{3}{\pi}\rangle\langle 1, f^2g\rangle\\
    +
    \sum_{k\geq 1}\langle \psi , u_k\rangle \langle u_k, f^2g \rangle
    +
    \frac{1}{4\pi}\int_{\mathbb{R}}\langle \phi , E_{t}\rangle \langle E_{t}, f^2g\rangle \dd t.
\end{multline}
Noticing the fact
\begin{multline}
\langle \psi , u_k\rangle=\frac{1}{(1/4+t_k^2)^l}\langle \psi , \Delta ^l u_k\rangle
=\frac{1}{(1/4+t_k^2)^l}\langle \Delta ^l \psi , u_k\rangle
\\
\ll\frac{1}{(1/4+t_k^2)^l}
\int_{\Gamma\backslash\mathbb{H}}|u_k(z)||\Delta^l \psi(z)|\dd \mu z,
\end{multline}
\begin{multline}
\langle \psi , E_t\rangle=\frac{1}{(1/4+t^2)^l}\langle \psi , \Delta ^l E_t\rangle
=\frac{1}{(1/4+t^2)^l}\langle \Delta ^l \psi ,  E_t\rangle
\\
\ll\frac{1}{(1/4+t^2)^l}
\int_{\Gamma\backslash\mathbb{H}}
|E_t(z)||\Delta^l \psi(z)|\dd \mu z,
\end{multline}
and the sup norm bound for Eisenstein series
(see \cite[Theorem 1.1]{Young2018supnorm})
\begin{equation}
\max_{z\in \Omega}|E_t(z)|\ll_{\Omega, \varepsilon} (1+|t|)^{\frac{3}{8}+\varepsilon},
\end{equation}
where $\Omega$ is a fixed compact subset in $\mathbb{H}$.
Using the Cauchy--Schwarz inequality, we have
\begin{equation}
\langle \psi , u_k\rangle\ll_{\psi, A} t_k^{-A}, \quad \langle \psi , E_t\rangle\ll_{\psi, A}(1+|t|)^{-A} \, \text{for any} \,A>0.
\end{equation}
By using Parseval's formula and Watson's formula, together with the convexity bounds of central $L$-values,
we have
\begin{equation}\label{eqn:<u_k, f^2g>_trivialbound}
 \langle u_k, f^2g \rangle\ll (t_ft_gt_k)^B \quad \text{and} \quad \langle E_{t}, f^2g\rangle\ll ((1+|t|)t_ft_g)^B,
\end{equation}
for some absolute constant $B>0$. Then using \eqref{eqn:Parseval_psi f^2g} and \eqref{eqn:<u_k, f^2g>_trivialbound}, we know the contributions from $u_k$ and $E_t$ with $t_k\geq \max(t_f, t_g)^{o(1)}$ and $|t|\geq \max(t_f, t_g)^{o(1)}$ are both negligibly small.
Hence we have
\begin{multline*}
    \int_{\Gamma\backslash\mathbb{H}}
    \psi(z)f^2(z)g(z)\dd \mu z=
    \langle \psi , \frac{3}{\pi}\rangle\langle 1, f^2g\rangle
    + \delta_{t_g\ll t_f^{o(1)}}\frac{3}{\pi}\langle \psi , g\rangle  \langle 1, f^2 g^2\rangle\\
    +
    \sum_{\substack{t_k\ll \max(t_f, t_g)^{o(1)}\\ u_k\neq \sqrt{\frac{3}{\pi}}g}}\langle \psi , u_k\rangle \langle u_k, f^2g \rangle
    +
    \frac{1}{4\pi}\int_{|t|\ll \max(t_f, t_g)^{o(1)}}\langle \psi , E_{t}\rangle \langle E_{t}, f^2g\rangle \dd t\\
    +O(\max(t_f, t_g)^{-A}).
\end{multline*}
Then by using Lemma \ref{lemma: <f^2, g>}  and Lemma \ref{lemma:ef^2g} in different cases, we complete the proof of Theorem \ref{thm:psi f^2g}.
\end{proof}

\subsection{Proof of Lemma \ref{lemma: <f^2, g>}}
By using Watson's formula and the factorization
\begin{equation*}
    \Lambda(1/2, f\times f\times g)=\Lambda(1/2, \sym^2f \times g)\Lambda(1/2, g),
\end{equation*}
we have
\begin{equation}
    |\langle f^2 , g\rangle|^2 \ll \frac{\Lambda(1/2, \sym^2f\times g)\Lambda(1/2, g)}{\Lambda(1, \sym^2f)^2\Lambda(1, \sym^2g)}.
\end{equation}
From Lapid's theorem \cite{Lapid} we know the nonnegativity of central $L$-values, then using Stirling's formula, we have
\begin{equation}
    \langle f^2 , g\rangle\ll \frac{L(1/2, \sym^2f\times g)^{1/2}L(1/2, g)^{1/2}\exp\Big(-\frac{\pi}{2}(|t_f+\frac{t_g}{2}|+|t_f-\frac{t_g}{2}|-2t_f)\Big)}
    {L(1,\sym^2f)L(1, \sym^2g)^{1/2}{t_g}^{1/2}\prod_{\pm}(1+|t_g\pm2t_f|)^{1/4}}.
\end{equation}
We have \eqref{eqn:f^2g1a} and \eqref{eqn:f^2g1b} in different cases, under GLH.

\subsection{The Rankin--Selberg theory and Watson's formula}
Before proving Lemma \ref{lemma:ef^2g}, we introduce some useful formulas as follow.
According to the Rankin--Selberg theory, we have
\begin{equation}
    \langle u_kg, E_{\tau}\rangle
    =\frac{\rho_k(1)\rho_g(1)\Lambda(1/2+i\tau, g\times u_j)}{2\Lambda(1+2i\tau)},
\end{equation}
\begin{equation}
    \langle E_tg , u_j\rangle
    =\frac{\rho_j(1)\rho_g(1)\Lambda(1/2+it, g\times u_j)}{2\Lambda(1+2it)},
\end{equation}
\begin{equation}
    \langle E_tg , E_{\tau}\rangle
    =\frac{\rho_t(1)\rho_g(1)\Lambda(1/2+i\tau+it, g)\Lambda(1/2+i\tau-it, g)}{2\Lambda(1+2i\tau)},
\end{equation}
where $\rho_j(1)$, $\rho_k(1)$, $\rho_t(1)$ and $\rho_g(1)$ are normalized coefficients which satisfy
$|\rho_j(1)|^2=\frac{\cosh(\pi t_j)}{L(1, \sym^2u_j)}$,
$|\rho_k(1)|^2=\frac{\cosh(\pi t_k)}{L(1, \sym^2u_k)}$,
$|\rho_t(1)|^2=\frac{\cosh(\pi t)}{|\zeta(1+2it)|^2}$,
$|\rho_g(1)|^2=\frac{\pi}{3}\frac{\cosh(\pi t_g)}{L(1, \sym^2g)}$.
Thus
\begin{equation}\label{eqn:|<u_kg, E>|^2}
   |\langle u_kg , E_\tau\rangle|^2
    \ll \frac{\Lambda(1/2, E_\tau\times g\times u_k)}{|\Lambda(1+2i\tau)|^2\Lambda(1, \sym^2g)\Lambda(1, \sym^2u_k)},
\end{equation}
\begin{equation}
    |\langle E_tg , u_j\rangle|^2
    \ll \frac{\Lambda(1/2, E_t\times g\times u_j)}{|\Lambda(1+2it)|^2\Lambda(1, \sym^2g)\Lambda(1, \sym^2u_j)},
\end{equation}
and
\begin{equation}\label{eqn:|<Eg, E>|^2}
    |\langle E_tg , E_{\tau}\rangle|^2
    \ll \frac{\Lambda(1/2, E_\tau \times E_t\times g)}{|\Lambda(1+2i\tau)\Lambda(1+2it)|^2\Lambda(1, \sym^2g)}.
\end{equation}
From Watson's formula \cite{watson2008rankin}, we have
\begin{equation}
    |\langle u_kg , u_j\rangle|^2\ll \frac{\Lambda(1/2, u_k\times g\times u_j)}{\Lambda(1, \sym^2u_k)\Lambda(1, \sym^2g)\Lambda(1, \sym^2u_j)},
\end{equation}
and
\begin{equation}
    |\langle u_j , f^2\rangle|^2\ll \frac{\Lambda(1/2, u_j\times \sym^2 f)\Lambda(1/2, u_j)}{\Lambda(1, \sym^2 f)^2\Lambda(1, \sym^2u_j)}.
\end{equation}
By Stirling's formula, we have
\begin{multline}\label{eqn:bound_cusp}
    |\langle u_kg , u_j\rangle \langle u_j, f^2 \rangle|\ll
    \frac{L(1/2, u_j)^{\frac{1}{2}}L(1/2, \sym^2f\times u_j)^{\frac{1}{2}}
    L(1/2, u_k\times g\times u_j)^{\frac{1}{2}}}
    {L(1, \sym^2u_j)L(1, \sym^2f)L(1, \sym^2u_k)^{\frac{1}{2}}L(1, \sym^2g)^{\frac{1}{2}}}
    \\ \times
    \frac{e^{-\frac{\pi}{2}Q_1(t_j; t_f, t_g, t_k)}}
    {t_j^{\frac{1}{2}}\prod_{\pm}(1+|t_j\pm2t_f|)^{\frac{1}{4}}\prod_{\pm, \pm}(1+|t_j\pm t_g\pm t_k|)^{\frac{1}{4}}},
\end{multline}
\begin{multline}\label{eqn:bound_est}
    |\langle E_tg , u_j\rangle \langle u_j, f^2 \rangle|\ll
    \frac{L(1/2, u_j)^{\frac{1}{2}}L(1/2, \sym^2f\times u_j)^{\frac{1}{2}}
    L(1/2, E_t\times g\times u_j)^{\frac{1}{2}}}
    {L(1, \sym^2u_j)L(1, \sym^2f)|\zeta(1+2it)|L(1, \sym^2g)^{\frac{1}{2}}}
    \\ \times
    \frac{e^{-\frac{\pi}{2}Q_1(t_j; t_f, t_g, t_k)}}
    {t_j^{\frac{1}{2}}\prod_{\pm}(1+|t_j\pm2t_f|)^{\frac{1}{4}}\prod_{\pm, \pm}(1+|t_j\pm t_g\pm t|)^{\frac{1}{4}}},
\end{multline}
\begin{multline}\label{eqn:bound_ugEEf2}
    |\langle u_kg , E_{\tau}\rangle \langle E_{\tau}, f^2\rangle|\ll
     \frac{|\zeta(1/2+i\tau)L(1/2+i\tau, \sym^2f)
    L(1/2+i\tau, u_k\times g)|}
    {|\zeta(1+2i\tau)|L(1, \sym^2f)L(1, \sym^2u_k)^{\frac{1}{2}}L(1, \sym^2g)^{\frac{1}{2}}}
    \\ \times
    \frac{e^{-\frac{\pi}{2}Q_1(\tau; t_f, t_g, t_k)}}
    {|\tau|^{\frac{1}{2}}\prod_{\pm}(1+|\tau\pm2t_f|)^{\frac{1}{4}}\prod_{\pm, \pm}(1+|\tau\pm t_g\pm t_k|)^{\frac{1}{4}}},
\end{multline}
and
\begin{multline}\label{eqn:bound_EgEEf2}
    |\langle E_tg , E_{\tau}\rangle \langle E_{\tau}, f^2\rangle|\ll
    \frac{|\zeta(1/2+i\tau)L(1/2+i\tau, \sym^2f)\prod_{\pm}
    L(1/2+i\tau\pm it, g)|}
    {|\zeta(1+2i\tau)\zeta(1+2it)|L(1, \sym^2f)L(1, \sym^2g)^{\frac{1}{2}}}
    \\ \times
    \frac{e^{-\frac{\pi}{2}Q_1(\tau; t_f, t_g, t)}}
    {|\tau|^{\frac{1}{2}}\prod_{\pm}(1+|\tau\pm2t_f|)^{\frac{1}{4}}\prod_{\pm, \pm}(1+|\tau\pm t_g\pm t|)^{\frac{1}{4}}},
\end{multline}
where
\begin{multline}\label{eqn:Q1}
Q_1(t_j; t_f, t_g, t_k)=\Big|\frac{t_j}{2}
+t_f\Big|+\Big|\frac{t_j}{2}
-t_f\Big|
+\frac{|t_j+t_g+t_k|}{2}
+\frac{|t_j+t_g-t_k|}{2}
\\
+\frac{|t_j-t_g+t_k|}{2}
+\frac{|t_j-t_g-t_k|}{2}
-t_j-2t_f-t_k-t_g.
\end{multline}
Moreover, if $2t_f\leq t_g-t_k$, we have
\begin{equation}\label{eqn:Q1_cases(tf_small)}
Q_1(t_j; t_f, t_g, t_k)=\begin{cases}
t_g-t_k-t_j,     &0\leq t_j\leq 2t_f,\\
t_g-2t_f-t_k,       &2t_f<t_j\leq t_g-t_k,\\
t_j-2t_f, &t_g-t_k<t_j\leq t_g+t_k,\\
2t_j-2t_f-t_g-t_k,  &t_g+t_k<t_j,\\
\end{cases}
\end{equation}
if $t_g-t_k<2t_f\leq t_g+t_k$, we have
\begin{equation}\label{eqn:Q1_cases(tf_middle)}
Q_1(t_j; t_f, t_g, t_k)=\begin{cases}
t_g-t_k-t_j,     &0\leq t_j\leq t_g-t_k,\\
0,       &t_g-t_k<t_j\leq 2t_f,\\
t_j-2t_f, &2t_f<t_j\leq t_g+t_k,\\
2t_j-2t_f-t_g-t_k,  &t_g+t_k<t_j,\\
\end{cases}
\end{equation}
and if $t_g+t_k<2t_f$, we have
\begin{equation}\label{eqn:Q1_cases(tf_large)}
Q_1(t_j; t_f, t_g, t_k)=\begin{cases}
t_g-t_k-t_j,     &0\leq t_j\leq t_g-t_k,\\
0,       &t_g-t_k<t_j\leq t_g+t_k,\\
t_j-t_g-t_k, &t_g+t_k<t_j\leq 2t_f,\\
2t_j-2t_f-t_g-t_k,  &2t_f<t_j.\\
\end{cases}
\end{equation}
\subsection{Proof of Lemma \ref{lemma:ef^2g}}
From Parseval's identity, we have
\begin{equation}\label{eqn:Parseval_ugf^2}
    \langle u_kg , f^2\rangle
=
\frac{3}{\pi}\langle u_kg , 1\rangle\langle 1, f^2\rangle
+
\sum_{j\geq 1}\langle u_kg , u_j\rangle \langle u_j, f^2 \rangle
+
\frac{1}{4\pi}\int_{\mathbb{R}}\langle u_kg , E_{\tau}\rangle \langle E_{\tau}, f^2\rangle \dd \tau,
\end{equation}
and
\begin{equation}\label{eqn:Parseval_Egf^2}
     \langle E_tg , f^2\rangle
=
\frac{3}{\pi}\langle E_tg , 1\rangle\langle 1, f^2\rangle
+
\sum_{j\geq 1}\langle E_tg , u_j\rangle \langle u_j, f^2 \rangle
+
\frac{1}{4\pi}\int_{\mathbb{R}}\langle E_tg , E_{\tau}\rangle \langle E_{\tau}, f^2\rangle \dd \tau.
\end{equation}
\subsubsection{The constant contribution}
The constant function contributes to $\langle u_kg,f^2\rangle$ is
\begin{equation}\label{eqn:const_u}
  \frac{3}{\pi}\langle u_kg, 1\rangle\langle 1, f^2\rangle
  =\langle u_kg, 1\rangle,
\end{equation}
and the constant function contributes to $\langle E_tg,f^2\rangle$ is
\begin{equation}\label{eqn:const_e}
  \frac{3}{\pi}\langle E_tg, 1\rangle\langle 1, f^2\rangle
  = 0.
\end{equation}
\subsubsection{The Eisenstein series contribution and the cusp form contribution}
We will show the details of estimating the cusp form contribution $\sum_{j\geq 1}\langle u_kg , u_j\rangle \langle u_j, f^2 \rangle$ in \eqref{eqn:Parseval_ugf^2}. The method for treating
$\int_{\mathbb{R}}\langle u_kg , E_{\tau}\rangle \langle E_{\tau}, f^2\rangle \dd \tau$,
$\sum_{j\geq 1}\langle E_tg , u_j\rangle \langle u_j, f^2 \rangle$
and $\int_{\mathbb{R}}\langle E_tg , E_{\tau}\rangle \langle E_{\tau}, f^2\rangle \dd \tau$
is similar.\par
Note that $t_k\ll \max(t_f, t_g)^{o(1)}$.
If $\log t_f=o(\log t_g)$, by \eqref{eqn:Q1_cases(tf_small)}, we have $Q_1(t_j; t_f, t_g, t_k)\geq t_g-t_g^{o(1)}$.
Using the convexity bound for $L$-functions, we have
\begin{multline}\label{eqn:tfverysmall}
\sum_{j\geq 1}|\langle u_kg , u_j\rangle \langle u_j, f^2 \rangle|\\
\ll
    \sum_{t_j\leq  t_g+t_g^{o(1)}}
    \frac{t_g^{O(1)}\exp(-\frac{\pi}{2}(t_g-t_g^{o(1)}))}
    {t_j^{\frac{1}{2}}\prod_{\pm}(1+|t_j\pm2t_f|)^{\frac{1}{4}}\prod_{\pm, \pm}(1+|t_j\pm t_g\pm t_k|)^{\frac{1}{4}}}
    +O(e^{-\frac{\pi}{2}t_g+t_g^{o(1)}})
    \ll e^{-\frac{\pi}{2}t_g+t_g^{o(1)}}.
\end{multline}
If $\log t_g=o(\log t_f)$, by \eqref{eqn:Q1_cases(tf_large)}, we have $Q_1(t_j; t_f, t_g, t_k)\geq t_f^{o(1)}$ for $t_j\geq t_g+2t_f^{o(1)}$ which contributes a negligibly small error term in $\sum_{j\geq 1}\langle u_kg , u_j\rangle \langle u_j, f^2 \rangle$.
Using GLH for $L$-functions, we have
\begin{multline}\label{eqn:tfverylarge}
\sum_{j\geq 1}|\langle u_kg , u_j\rangle \langle u_j, f^2 \rangle|\\
\ll
    \sum_{t_j\leq  t_g+2t_f^{o(1)}}
    \frac{t_f^{o(1)}}
    {t_j^{\frac{1}{2}}\prod_{\pm}(1+|t_j\pm2t_f|)^{\frac{1}{4}}\prod_{\pm, \pm}(1+|t_j\pm t_g\pm t_k|)^{\frac{1}{4}}}
    \ll t_f^{-\frac{1}{2}+o(1)}.
\end{multline}
\par
Now we consider the case of $\log t_f\asymp \log t_g$.
We first deal with the case $2t_f\leq t_g-t_g^\varepsilon$.
Since $t_k\ll t_g^\varepsilon$, we have  $2t_f\leq t_g-t_k$.
By \eqref{eqn:Q1_cases(tf_small)}, one can truncate the sum for $t_j$ in $[2t_f-t_g^{o(1)}, t_g+t_g^{o(1)}]$.
Using the convexity bound for $L$-functions,
 we have
\begin{equation*}
\begin{split}
\sum_{j\geq 1}&|\langle u_kg , u_j\rangle \langle u_j, f^2 \rangle|\\
    &\ll
    \sum_{2t_f-t_g^{o(1)}\leq t_j\leq  t_g+t_g^{o(1)}}
    \frac{t_g^{O(1)}\exp(-\frac{\pi}{2}(t_g-2t_f-t_k))}
    {t_j^{\frac{1}{2}}\prod_{\pm}(1+|t_j\pm2t_f|)^{\frac{1}{4}}\prod_{\pm, \pm}(1+|t_j\pm t_g\pm t_k|)^{\frac{1}{4}}}
    \\
    &\qquad \qquad  \qquad \qquad
    +O(e^{-\frac{\pi}{2}(t_g-2t_f-t_k
    +t_g^{o(1)})})\\
    &
    \ll e^{-\frac{\pi}{2}(t_g-2t_f)+t_g^{o(1)}}.
    \end{split}
\end{equation*}
Now we deal with the case $ t_g-t_g^\varepsilon\leq 2t_f$.
From now on, we assume GLH in the rest of this section.
When $t_g-t_g^\varepsilon <2t_f\leq t_g-t_k$,
from \eqref{eqn:Q1_cases(tf_small)},
we have $Q_1(t_j; t_f, t_g, t_k)\geq t_g^{o(1)}$ for $t_j\notin [t_g-t_g^{o(1)},  2t_f+t_g^{o(1)}]$
which contributes a negligibly small error term in $\sum_{j\geq 1}\langle u_kg , u_j\rangle \langle u_j, f^2 \rangle$.
Thus we have
\begin{multline*}
\sum_{j\geq 1}|\langle u_kg , u_j\rangle \langle u_j, f^2 \rangle|
\\
\ll
   \sum_{2t_f-t_g^{o(1)}\leq t_j\leq  t_g+t_g^{o(1)}}
    \frac{t_g^{o(1)}}
    {t_j^{\frac{1}{2}}\prod_{\pm}(1+|t_j\pm2t_f|)^{\frac{1}{4}}\prod_{\pm, \pm}(1+|t_j\pm t_g\pm t_k|)^{\frac{1}{4}}}\\
    \ll \sum_{2t_f-t_g^{o(1)}\leq t_j\leq  t_g+t_g^{o(1)}}
    \frac{t_g^{-\frac{3}{4}+o(1)}}
    {t_j^{\frac{1}{2}}}
    \ll t_g^{-\frac{1}{4}+\varepsilon+o(1)} (\asymp t_f^{-\frac{1}{4}+o(1)}{t_g}^{\varepsilon}).
\end{multline*}
When $t_g-t_k <2t_f\leq t_g+t_k$,
from \eqref{eqn:Q1_cases(tf_middle)},
we have $Q_1(t_j; t_f, t_g, t_k)\geq t_g^{o(1)}$ for $t_j\notin [2t_f-t_g^{o(1)},  t_g+t_g^{o(1)}]$
which contributes a negligibly small error term in $\sum_{j\geq 1}\langle u_kg , u_j\rangle \langle u_j, f^2 \rangle$.
Thus we have
\begin{multline*}
\sum_{j\geq 1}|\langle u_kg , u_j\rangle \langle u_j, f^2 \rangle|
\ll
   \sum_{2t_f-t_g^{o(1)}\leq t_j\leq  t_g+t_g^{o(1)}}
    \frac{t_g^{o(1)}}
    {t_j^{\frac{1}{2}}\prod_{\pm}(1+|t_j\pm2t_f|)^{\frac{1}{4}}\prod_{\pm, \pm}(1+|t_j\pm t_g\pm t_k|)^{\frac{1}{4}}}
    \\
    \ll t_g^{-\frac{1}{4}+o(1)} (\asymp t_f^{-\frac{1}{4}+o(1)}).
\end{multline*}
When $t_g+t_k<2t_f$,
from \eqref{eqn:Q1_cases(tf_large)},
we have $Q_1(t_j; t_f, t_g, t_k)\geq t_f^{o(1)}$ for $t_j\notin [t_g-t_f^{o(1)},  t_g+t_f^{o(1)}]$
which contributes a negligibly small error term.
Thus we have
\begin{multline}\label{eqn:tgtkleq2tf}
\sum_{j\geq 1}|\langle u_kg , u_j\rangle \langle u_j, f^2 \rangle|\\
\ll
    \sum_{t_g-t_f^{o(1)}\leq t_j\leq  t_g+t_f^{o(1)}}
    \frac{t_f^{o(1)}}
    {t_j^{\frac{1}{2}}\prod_{\pm}
    (1+|t_j\pm2t_f|)^{\frac{1}{4}}\prod_{\pm, \pm}(1+|t_j\pm t_g\pm t_k|)^{\frac{1}{4}}}\\
    \ll t_f^{-\frac{1}{4}+o(1)}(2t_f-t_g)^{-\frac{1}{4}}.
\end{multline}
Combining the bounds in the above cases, we have
\begin{equation}
\sum_{j\geq 1}\langle u_kg , u_j\rangle \langle u_j, f^2 \rangle
\ll \begin{cases}
t_f^{-\frac{1}{4}+o(1)} (1+|2t_f-t_g|)^{-\frac{1}{4}} , &\textrm{if } t_g-t_g^{\varepsilon}\leq 2t_f,\\
e^{-\frac{\pi}{2}(t_g-2t_f)+t_g^{o(1)}},     &\textrm{if } 2t_f\leq t_g-t_g^{\varepsilon}.\\
\end{cases}
\end{equation}
Similarly,
we have the same bounds for
$\int_{\mathbb{R}}\langle u_kg , E_{\tau}\rangle \langle E_{\tau}, f^2\rangle \dd \tau$,
$\sum_{j\geq 1}\langle E_tg , u_j\rangle \langle u_j, f^2 \rangle$
and $\int_{\mathbb{R}}\langle E_tg , E_{\tau}\rangle \langle E_{\tau}, f^2\rangle \dd \tau$.
Combining these bounds with \eqref{eqn:const_e}, \eqref{eqn:const_u} and Parseval's identities \eqref{eqn:Parseval_ugf^2}, \eqref{eqn:Parseval_Egf^2}, we finish the proof of Lemma \ref{lemma:ef^2g}.

\section{Proof of Theorem \ref{thm:22}}\label{sec:22}



In this section, we will prove Theorem \ref{thm:22} by assuming Proposition \ref{prop:mixedmonents1} below.
From Parseval's identity, we have
\begin{equation}\label{eqn:Parseval's identity}
\langle f^2 , g^2\rangle
=
\frac{3}{\pi}\langle f^2 , 1\rangle\langle 1, g^2\rangle
+
\sum_{j\geq 1}\langle f^2 , u_j\rangle \langle u_j, g^2 \rangle
+
\frac{1}{4\pi}\int_{\mathbb{R}}\langle f^2 , E_t\rangle \langle E_t, g^2\rangle \dd t.
\end{equation}
By our normalization \eqref{eqn:L2normalization}, we know the constant function contribution to $\frac{\langle f^2,g^2\rangle}{\vol(\Gamma\backslash \mathbb{H})}$ is
\begin{equation}\label{eqn:constant}
  \frac{9}{\pi^2}\langle f^2 , 1\rangle\langle 1, g^2\rangle
  = 1.
\end{equation}
Here we know $\vol(\Gamma\backslash \mathbb{H})=\pi/3$.
Now we estimate
\begin{equation*}\label{}
\int_{\mathbb{R}}\langle f^2 , E_t\rangle \langle E_t, g^2\rangle \dd t.
\end{equation*}
By \cite[Lemma 2.1]{ButtcaneKhan2017}, we have
\begin{equation*}
\int_{\mathbb{R}}|\langle f^2 , E_t\rangle |^2\dd t\ll t_f^{-1+\varepsilon},
\end{equation*}
under GLH.
Applying the Cauchy--Schwarz inequality, we have
\begin{equation}\label{eqn:boundforEisenstein}
\int_{\mathbb{R}}\langle f^2 , E_t\rangle \langle E_t, g^2\rangle \dd t\leq \left(\int_{\mathbb{R}}|\langle f^2 , E_t\rangle |^2\dd t \right)^{\frac{1}{2}}\left(\int_{\mathbb{R}}|\langle g^2 , E_t\rangle |^2\dd t \right)^{\frac{1}{2}}\ll (t_ft_g)^{-\frac{1}{2}+\varepsilon}.
\end{equation}
Then we need to show that the cusp form contribution $\sum_{j\geq 1}\langle f^2 , u_j\rangle \langle u_j, g^2 \rangle$ is small.
We only need to deal with the case when $u_j$ is even, otherwise we have $\langle f^2 , u_j\rangle=0$.
Watson's formula \cite{watson2008rankin} gives
\begin{equation*}
|\langle f^2 , u_j\rangle|^2\ll \frac{\Lambda(1/2, u_j)\Lambda(1/2, \sym^2f \times u_j)}{\Lambda(1, \sym^2f)^2\Lambda(1, \sym^2u_j)}.
\end{equation*}
Hence we have
\begin{multline}\label{eqn:cusp_part}
\sum_{j\geq 1}\langle f^2 , u_j\rangle \langle u_j, g^2 \rangle
\leq \sum_{j\geq 1} |\langle f^2 , u_j\rangle \langle u_j, g^2 \rangle|
\\
\ll \sum_{j\geq 1}\frac{L(1/2, u_j)L(1/2, \sym^2f\times u_j)^{\frac{1}{2}}L(1/2, \sym^2g \times u_j)^{\frac{1}{2}}}{L(1,\sym^2f)L(1,\sym^2g)
L(1,\sym^2u_j)}H(t_j; t_f, t_g),
\end{multline}
where
\begin{equation}\label{weight H}
H(t_j; t_f, t_g)=
\frac{L_{\infty}(1/2, u_j)L_{\infty}(1/2, \sym^2f\times u_j)^{\frac{1}{2}}L_{\infty}(1/2, \sym^2g \times u_j)^{\frac{1}{2}}}
{L_{\infty}(1,\sym^2f)L_{\infty}(1,\sym^2g)L_{\infty}(1,\sym^2u_j)}.
\end{equation}
By Stirling's formula, one derives
\begin{equation}\label{}
H(t_j; t_f, t_g)\ll
\frac{\exp(-\frac{\pi}{2}Q(t_j; t_f, t_g))}
{|t_j|\prod_{\pm}(1+|t_j\pm 2t_f|)^{\frac{1}{4}}\prod_{\pm}(1+|t_j\pm 2t_g|)^{\frac{1}{4}}},
\end{equation}
where
\begin{equation}\label{eqn:Q(tj; tf,tg)}
Q(t_j; t_f, t_g)=|t_f+\frac{t_j}{2}|+|t_f-\frac{t_j}{2}|
+|t_g+\frac{t_j}{2}|+|t_g-\frac{t_j}{2}|
-2t_f-2t_g.
\end{equation}
More specifically, assume $t_f\leq t_g$, we have
\begin{equation}\label{eqn:Q_cases}
Q(t_j; t_f, t_g)=\begin{cases}
0,              &0\leq t_j\leq 2t_f,\\
t_j-2t_f,       &2t_f<t_j\leq 2t_g,\\
2t_j-2t_f-2t_g, &2t_g<t_j.
\end{cases}
\end{equation}
Note that the contribution of $t_j > 2t_f+{t^{\varepsilon}_g}$ is negligibly small due to the rapidly decay of the exponential function.
Then (\ref{eqn:cusp_part}) is reduced to bounding the following mixed moment
\begin{equation}\label{mixed moment}
\frac{{(\log\log t_g)}^{3}}
{(t_f t_g)^{\frac{1}{4}}}
\sum_{t_j\leq 2t_f+{t^{\varepsilon}_g}}
\frac{L(1/2, u_j)
L(1/2, \sym^2f\times u_j)^{\frac{1}{2}}
L(1/2, \sym^2g \times u_j)^{\frac{1}{2}}}
{|t_j|(1+|t_j-2t_f|)^{\frac{1}{4}}(1+|t_j- 2t_g|)^{\frac{1}{4}}}.
\end{equation}
Under GRH and GRC, by \cite[Theorem 1.1]{WX16}, we have
\begin{equation}
  \frac{1}{\log\log t_f} \ll L(1,\sym^2 f) \ll (\log\log t_f)^3.
\end{equation}

We consider three cases:
\begin{itemize}
  \item[a)] $t_f < t_g^{\varepsilon}$,
  \item[b)] $t_g^{\varepsilon}\leq t_f \leq t_g^{1-5\varepsilon}$,
  \item[c)] $t_g^{1-5\varepsilon}\leq t_f \leq t_g$.
\end{itemize}
Assume GRH. Then we have the following bounds 
\begin{equation}\label{eqn:GLH}
  \begin{split}
    &L(1/2, u_j) \ll t_j^\varepsilon, \\
    &L(1/2, \sym^2f\times u_j) \ll (t_f+t_j)^\varepsilon,  \\
    &L(1/2, \sym^2g \times u_j) \ll (t_g+t_j)^\varepsilon.
  \end{split}
\end{equation}
In case a),  \eqref{mixed moment} is bounded by
\begin{equation}\label{eqn:22 small}
   O\Big( t_g^{-\frac{1}{4}+\varepsilon}\sum_{t_j\leq 3t_g^{\varepsilon}} t_j^{-1}
  (1+t_j+t_g)^{-\frac{1}{4}} \Big)
  = O\left( t_g^{-\frac{1}{2}+2\varepsilon}\right).
\end{equation}
In case b), we denote $t_f=t_g^{\delta}$ where $\varepsilon\leq \delta\leq 1-5\varepsilon$, then the contribution to \eqref{mixed moment} is
\begin{equation*}
  \ll t_g^{-\frac{1}{2}-\frac{\delta}{4}
  +\varepsilon}
  \Big(\sum_{t_j\leq 2t_f-t_f^{\varepsilon}} t_j^{-1}(1+|t_j-2t_f|)^{-\frac{1}{4}}
  +\sum_{2t_f-t_f^{\varepsilon}
  \leq t_j \leq 2t_f+t_f^{\varepsilon}}
  t_j^{-1}
  \Big)
  \ll t_g^{-\frac{1-\delta}{2}
  +2\varepsilon}.
\end{equation*}
Here we have used the fact $\sum_{t_j \in (T,T+M)} 1 \ll TM$ for any $M\in [1,T]$.
In case c), we need the following proposition, which will be proven in the next section.

\begin{proposition}\label{prop:mixedmonents1}
  Assume GRH and GRC. Let $f,g$ be two orthogonal Hecke--Maass cusp forms over $\SL_2(\mathbb{Z})$ with spectral parameters $t_f, t_g$ with $t_g^{\delta}\leq t_f\leq t_g$ for some $\delta>0$. Let $\varepsilon>0$ be a sufficiently small number, $t_f^{1-\varepsilon}\leq X\leq 3t_g$, and $X^{\varepsilon}\leq Y \leq X$. 
  Then for any positive real numbers $\ell_1,\ell_2,\ell_3$ we have
  \begin{multline}\label{eqn:mixedmoments}
    \sum_{X<t_j\leq X+Y}
    L(1/2, u_j)^{\ell_1}
    L(1/2, \sym^2f\times u_j)^{\ell_2}
    L(1/2, \sym^2g \times u_j)^{\ell_3}
    \\ \ll_\varepsilon XY(\log X)
    ^{\frac{\ell_1(\ell_1-1)}{2}
    +\frac{\ell_2(\ell_2-1)}{2}
    +\frac{\ell_3(\ell_3-1)}{2}+\varepsilon}.
  \end{multline}
\end{proposition}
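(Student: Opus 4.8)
The plan is to adapt Soundararajan's moment method from \cite{soundararajan2009moments} to the short spectral window $(X,X+Y]$, treating a product of three distinct $L$-functions simultaneously under GRH and GRC. The central principle is that, conditionally on GRH, each central value $\log L(1/2,\pi_j)$ is dominated by a short Dirichlet-polynomial proxy over prime powers, so that $L(1/2,\pi_j)$ is rarely much larger than its typical size. First I would fix a parameter $x = X^{1/\log\log X}$ (or a comparable truncation) and, for each $u_j$ in the window, bound $\log L(1/2,u_j)$, $\log L(1/2,\sym^2 f\times u_j)$ and $\log L(1/2,\sym^2 g\times u_j)$ from above by the corresponding truncated prime sums $\sum_{p\le x} \lambda_{\pi}(p) p^{-1/2}/\log p \cdot (\text{weight}) + O(\log X/\log x)$, as in Soundararajan's Main Proposition; the three objects have analytic conductors that are polynomial in $X$, so the error terms are uniform across the window.

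The core estimate is then to show that for each real parameter $V>0$ the number of $u_j$ in $(X,X+Y]$ for which the relevant prime sum exceeds $V$ decays like a Gaussian tail. To capture this I would introduce the Kuznetsov/spectral large sieve in the short interval $(X,X+Y]$, which controls the second moments $\sum_{X<t_j\le X+Y} |\lambda_j(n)|^2$ and the mixed correlation sums of Hecke eigenvalues arising after opening $\big(\sum_{p\le x}\cdots\big)^{2k}$. The crucial input is that the Weyl-law density in the window is $\asymp XY$ (this is where $Y\ge X^\varepsilon$ is used to guarantee that the window contains $\asymp XY$ forms and that off-diagonal Kloosterman contributions are lower order), and that after expanding the high moments, the diagonal terms reproduce the Gaussian variance $\tfrac12\log\log X$ per factor while the off-diagonal terms are negligible by orthogonality of distinct Hecke eigenvalues. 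Since $f\neq g$ and the three symmetric-power lifts $1$, $\sym^2 f$, $\sym^2 g$ are pairwise non-isomorphic, the covariance between the three prime sums vanishes to leading order, so the variances simply add, yielding the exponent $\tfrac{\ell_1(\ell_1-1)}{2}+\tfrac{\ell_2(\ell_2-1)}{2}+\tfrac{\ell_3(\ell_3-1)}{2}$ after integrating $e^{\ell_i V}$ against the Gaussian tail $\exp(-V^2/(2\log\log X))$ over $V$.

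Combining these, I would write the left side of \eqref{eqn:mixedmoments} as a measure-weighted integral,
\begin{equation*}
\sum_{X<t_j\le X+Y} \prod_{i=1}^{3} L_i(1/2)^{\ell_i}
\ll \int_{\mathbb{R}^3} e^{\ell_1 V_1+\ell_2 V_2+\ell_3 V_3}\, \dd N(V_1,V_2,V_3),
\end{equation*}
where $N$ counts forms whose three prime proxies lie near $(V_1,V_2,V_3)$, and then insert the Gaussian tail bounds and optimize. A standard saddle-point evaluation of the resulting triple integral produces $XY(\log X)^{\sum_i \ell_i(\ell_i-1)/2+\varepsilon}$, the $\varepsilon$ absorbing the $\log\log X$ losses from the truncation and from the imperfect value of $L(1,\sym^2\cdot)$ via \cite{WX16}.

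I expect the main obstacle to be the short-interval spectral large sieve with the genuinely three-fold product structure: one must verify that the off-diagonal (Kloosterman) contributions in the window $(X,X+Y]$ of width as small as $X^\varepsilon$ remain negligible after raising the prime sums to large powers, and that the mixed diagonal terms linking $u_j$ to $\sym^2 f\times u_j$ and $\sym^2 g\times u_j$ do not create spurious correlations. Controlling these uniformly in $\ell_1,\ell_2,\ell_3$ and in the window parameters $X,Y$, while keeping the constant term from $\sym^2$ residues under control via GRC, is the delicate technical heart of the argument, and is what I would develop carefully in \S\ref{sec:fractional-moment}.
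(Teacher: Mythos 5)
Your overall strategy coincides with the paper's: Soundararajan's method with GRH upper bounds for $\log L(1/2,\cdot)$ by truncated prime sums, a Kuznetsov-based orthogonality statement in the short window $(X,X+Y]$ to compute moments of the resulting Dirichlet polynomials, and the pairwise non-isomorphy of $1$, $\sym^2 f$, $\sym^2 g$ (via Gelbart--Jacquet and Ramakrishnan) to kill cross-correlations. However, there is a genuine gap in your large-deviation bookkeeping: the mechanism you describe cannot produce the claimed exponent. You propose to integrate $e^{\ell_i V}$ against a \emph{mean-zero} Gaussian tail $\exp\bigl(-V^2/(2\log\log X)\bigr)$, which yields
\begin{equation*}
\int_{\mathbb{R}} e^{\ell_i V - V^2/(2\log\log X)}\,\dd V \asymp (\log X)^{\ell_i^2/2},
\end{equation*}
i.e.\ the exponent $\sum_i \ell_i^2/2$, not $\sum_i \ell_i(\ell_i-1)/2$. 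The missing ingredient is the negative mean of each proxy: in the expansion of $\log L$ over prime powers, the $p^2$-terms contribute $\tfrac12\sum_{p\leq\sqrt{x}}(\lambda_{\sym^2 u_j}(p)-1)p^{-1-2/\log x}(\cdots)$, whose deterministic part is $-\tfrac12\log\log x + O(\log\log\log)$; this shifts the distribution of each $\log L$ by $-\tfrac12\log\log$, and it is exactly this shift (the quantity $\mu(X)$ in the paper) that converts $\ell_i^2/2$ into $\ell_i(\ell_i-1)/2$. Controlling the fluctuating parts of these $p^2$-terms is not free: it requires GRH/GRC bounds of the shape $\sum_{p\leq x}\lambda_{\sym^4 f}(p)\lambda_{\sym^2 u_j}(p)/p = O(\log\log\log(t_j+t_f))$ and $\sum_{p\leq x}\lambda_{\sym^2 f}(p)\lambda_{\sym^2 g}(p)/p = O(\log\log\log(t_f+t_g))$ (the paper's Lemma \ref{lemma:sumofcoefficients}), which is where $\sym^2 f\ncong\sym^2 g$ and GRH for the relevant Rankin--Selberg $L$-functions enter a second time, beyond the first-order covariance cancellation you noted.

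Two further points where your plan diverges from what is needed. First, your fixed truncation $x=X^{1/\log\log X}$ is too lossy: the GRH upper bound carries an error $O(\log X/\log x)=O(\log\log X)$ in $\log L$, which exponentiates to a factor $(\log X)^{O(1)}$ and destroys the $+\varepsilon$ claim; Soundararajan's method requires the truncation $x$ to depend on the deviation level $V$ (the paper takes $x=(X+t_g)^{1/(\varepsilon V)}$) together with the splitting of the prime sum into a long piece over $p\leq z$ and a short piece over $z<p\leq x$, each handled with different moment lengths. Second, your three-dimensional joint distribution $N(V_1,V_2,V_3)$ with a saddle-point evaluation is workable in principle but heavier than necessary: since one only needs an upper bound for $\sum_j \exp\bigl(\sum_i \ell_i\log L_i\bigr)$, it suffices to study the \emph{single} linear-combination polynomial
\begin{equation*}
\mathcal{P}(t_j)=\sum_{p\leq x}\frac{(\ell_1+\ell_2\lambda_{\sym^2 f}(p)+\ell_3\lambda_{\sym^2 g}(p))\,\lambda_{u_j}(p)}{p^{1/2+1/\log x}}\Bigl(1-\frac{\log p}{\log x}\Bigr),
\end{equation*}
whose variance $(\ell_1^2+\ell_2^2+\ell_3^2)\log\log(X+t_g)$ already encodes the additivity you want, reducing everything to a one-dimensional tail estimate (the paper's Lemma \ref{lemma:AXY}). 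In practice any proof of joint tail bounds would pass through moments of such linear combinations anyway, so you should restructure the argument around them.
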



Now we can bound the mixed moment (\ref{mixed moment}) which gives the bound for the cusp form contribution
$\sum_{j\geq 1}\langle f^2 , u_j\rangle \langle u_j, g^2 \rangle$.
We split the summand in (\ref{mixed moment}) into four parts,
\begin{multline}
\frac{{(\log\log t_f)}^{3}}
{(t_f t_g)^{\frac{1}{4}}}
\left\{\sum_{t_j\leq {t_f}^{1-\varepsilon}}
+\sum_{{t_f}^{1-\varepsilon}< t_j\leq t_f}
+\sum_{t_f< t_j \leq 2t_f-{t_f}^{\varepsilon}}
+\sum_{2t_f-{t_f}^{\varepsilon}<t_j \leq 2t_f+{t^{\varepsilon}_f}}
\right\}\\
\frac{L(1/2, u_j)
L(1/2, \sym^2f\times u_j)^{\frac{1}{2}}
L(1/2, \sym^2g \times u_j)^{\frac{1}{2}}}
{|t_j|(1+|t_j-2t_f|)^{\frac{1}{4}}(1+|t_j- 2t_g|)^{\frac{1}{4}}}.
\end{multline}

In the range of $t_j\leq {t_f}^{1-\varepsilon}$, using \eqref{eqn:GLH}, we get that the contribution is bounded by
$O({t_f}^{-\frac{1}{2}+\varepsilon} {t_g}^{-\frac{1}{2}})$.\par

In the range of ${t_f}^{1-\varepsilon}< t_j\leq t_f$, we have $|t_j-2t_f|\gg t_f$, $ |t_j-2t_g|\gg t_g$, the summand contributes
\begin{equation}
\frac{{(\log\log t_f)}^{3}}
{(t_f t_g)^{\frac{1}{2}}}
\sum_{{t_f}^{1-\varepsilon}< t_j\leq t_f}
\frac{1}{t_j}L(1/2, u_j)
L(1/2, \sym^2f\times u_j)^{\frac{1}{2}}
L(1/2, \sym^2g \times u_j)^{\frac{1}{2}}.
\end{equation}
Applying the partial summation and Proposition \ref{prop:mixedmonents1}, we get the bound $O\left(\frac{{t_f}^{\frac{1}{2}}}{{t_g}^{\frac{1}{2}}}(\log t_f)^{-\frac{1}{4}+\varepsilon}\right)$.\par

In the range of $t_f< t_j \leq 2t_f-{t_f}^{\varepsilon}$ with any fixed $\varepsilon>0$, we have $t_j\asymp t_f$ and  ${t_f}^{\varepsilon}\leq|t_j-2t_f|<t_f$. Dividing the summation into small intervals and using the Proposition \ref{prop:mixedmonents1}, the summand contributes
\begin{multline}
    \frac{(\log\log t_f)^3}{{t_f}^\frac{5}{4}{t_g}^\frac{1}{4}}
    \sum_{t_f< t_j\leq 2t_f-{t_f}^{\varepsilon}}\frac{L(1/2, u_j)
    L(1/2, \sym^2f\times u_j)^{\frac{1}{2}}
    L(1/2, \sym^2g \times u_j)^{\frac{1}{2}}}
    {(1+2t_f-t_j)^\frac{1}{4}
    (1+2t_g-2t_f+2t_f-t_j)^\frac{1}{4}}\\
    \ll
    \frac{(\log\log t_f)^3}{{t_f}^\frac{5}{4}{t_g}^\frac{1}{4}}
    \sum_{\substack{k< t_f^{1-\varepsilon}\\
    (k-1)t_f^{\varepsilon}< 2t_f-t_j\leq kt_f^{\varepsilon}}}
    \frac{
    L(1/2, u_j)
    L(1/2, \sym^2f\times u_j)^{\frac{1}{2}}
    L(1/2, \sym^2g \times u_j)^{\frac{1}{2}}}{(1+kt_f^\varepsilon)^{\frac{1}{4}}(1+2t_g-2t_f+kt_f^\varepsilon)^{\frac{1}{4}}}\\
    \\
    \ll \frac{(\log\log t_f)^3}{{t_f}^\frac{5}{4}{t_g}^\frac{1}{4}}
    \sum_{k<t_f^{1-\varepsilon}}
    \frac{t_f^{1+\varepsilon}(\log t_f)^{-\frac{1}{4}+\varepsilon}}
    {(1+kt_f^\varepsilon)^{\frac{1}{4}}
    (1+2t_g-2t_f+kt_f^\varepsilon)^{\frac{1}{4}}}
    \\ \ll \frac{{t_f}^{\frac{1}{2}}}{\max(t_g-t_f, t_f)^{\frac{1}{4}}{t_g}^{\frac{1}{4}}}{(\log t_f)}^{-\frac{1}{4}+\varepsilon}.
\end{multline}
Note that $\max(t_g-t_f, t_f)\asymp t_g$, thus it is bounded by
$O\left(\frac{{t_f}^{\frac{1}{2}}}{{t_g}^{\frac{1}{2}}}{(\log t_f)}^{-\frac{1}{4}+\varepsilon}\right).$\par

In the range of $2t_f-{t_f}^{\varepsilon}<t_j \leq 2t_f+{t^{\varepsilon}_f}$, using \eqref{eqn:GLH}, the summation is bounded by
\begin{multline}
    \frac{(\log\log t_f)^3}{{t_f}^\frac{5}{4}{t_g}^\frac{1}{4}}\sum_{ 2t_f-t_f^{\varepsilon}<t_j \leq 2t_f+{t^{\varepsilon}_f}}
    L(1/2, u_j)
    L(1/2, \sym^2f\times u_j)^{\frac{1}{2}}
    L(1/2, \sym^2g \times u_j)^{\frac{1}{2}}
    \\ \ll {t_f}^{-\frac{1}{4}+\varepsilon}{t_g}^{-\frac{1}{4}}.
\end{multline}
\par

Combining the above estimates,  we get that (\ref{mixed moment}) is bounded by
$$O\left(\frac{{t_f}^{\frac{1}{2}}}{{t_g}^{\frac{1}{2}}}{(\log t_f)}^{-\frac{1}{4}+\varepsilon}\right).$$
Therefore
\begin{equation}\label{eqn:boundforcuspform}
    \sum_{j\geq 1}\langle f^2 , u_j\rangle \langle u_j, g^2 \rangle\ll
    \frac{{t_f}^{\frac{1}{2}}}{{t_g}^{\frac{1}{2}}}{(\log t_f)}^{-\frac{1}{4}+\varepsilon}.
\end{equation}
Theorem \ref{thm:22} follows upon combining \eqref{eqn:Parseval's identity}, \eqref{eqn:constant}, \eqref{eqn:boundforEisenstein} and \eqref{eqn:boundforcuspform}.

\section{Upper bounds for mixed moments of $L$-functions}\label{sec:fractional-moment}

In this section, we will prove Proposition \ref{prop:mixedmonents1}, by using Soundararajan's method \cite{soundararajan2009moments}. See also \cite{HL23}.

\subsection{Average of $\lambda_{u_j}(n)$ on the spectral aspect}

Let $\lambda_{u_j}(n)$ denote the eigenvalue of the $n$th Hecke operator corresponding to $u_j$.  In order to bound the mixed moment \eqref{mixed moment} above via Soundararajan's method, we require the average of $\lambda_{u_j}(n)$ on the spectral aspect.
Our tool for summing over the spectrum is the Kuznetsov trace formula as follow.
Let
\[
  S(a,b;c) = \sideset{}{^*}\sum_{d \bmod c} e\left(\frac{ad+b\bar{d}}{c}\right)
\]
be the classical Kloosterman sum. For any $n, m \geq1$, and any test function
$h(t)$ which is even and satisfies the following conditions:
\begin{itemize}
  \item [(i)] $h(t)$ is holomorphic in $|\Im(t)|\leq 1/2+\varepsilon$,
  \item [(ii)] $h(t)\ll (1+|t|)^{-2-\varepsilon}$ in the above strip.
\end{itemize}
Then we have the following Kuznetsov trace formula:
 \begin{multline}\label{eqn:KTF}
         \sum_{j\geq 1}w_jh(t_j)\lambda_{u_j}(n)\lambda_{u_j}(m)+\int_{-\infty}^{\infty}w(t)h(t)\eta_t(n)\eta_t(m)\,\dd t\\
         =\frac{\delta(n,m)}{\pi}\int_{-\infty}^{\infty}h(t)\tanh(\pi t)t\,\dd t+\sum_{c\geq 1}\frac{S(n, m;c)}{c}\mathcal{J}\left(\frac{4\pi \sqrt{nm}}{c}\right).
 \end{multline}
where $\delta(n, m)$  is the Kronecker symbol and
\[
w_j=\frac{2\pi}{L(1, \sym^2 u_j)},  \quad w(t)=\frac{1}{|\zeta(1+2i t)|^2},
\]
\[
\eta_t(n)=\sum_{ad=n}\left(\frac{a}{d}\right)^{2it},
\]
\[
\mathcal{J}(x)=2i\int_{-\infty}^{\infty}J_{2it}\frac{h(t)t}{\cosh(\pi t)}\dd t.
\]
 and $J_\nu(x)$ is the standard $J$-Bessel function.

Similarly to the choice of weight function in \cite{IvicJutila03},
let $X$ be a large real number, for $Y \leq X$ and $1 \leq M\leq Y/\log X$,
we define the weight functions
    \[
    h(t, T, M):=e^{-\left(\frac{t-T}{M}\right)^2}
    +e^{-\left(\frac{t+T}{M}\right)^2},
    \]
and
    \[
    \Phi_{X,Y,M}(t):=\frac{1}{\sqrt{\pi}M}\int_{X}^{X+Y}h(t, T, M)\dd T.
    \]
We know that for any fixed $A>0$, there exists $c>0$ such that (see \cite[(2.14) and (3.2)]{IvicJutila03} or \cite[Section 3]{BHS2021})
\begin{equation}
    \Phi_{X,Y,M}(t)=
    \begin{cases}
    1+O(t^{-2})    &
    X+cM\sqrt{\log X}<t\\
    &\text{and } t<X+Y-cM\sqrt{\log X},\\
    O((|t|+X)^{-A})   &
    t<X-cM\sqrt{\log X} \;
    \\
    &\text{or}\; t>X+Y+cM\sqrt{\log X},\\
    1+O\Big(\frac{M^3}
    {(M+\min(|t-X|,|t-X-Y|))^{3}}\Big)  &\text{otherwise.}
    \end{cases}
\end{equation}

\begin{lemma}\label{lemma:3.1}
    For any positive integer $n\ll M^2X^{2-\varepsilon}$, we have
    \begin{equation}
    \sum_{j\geq 1}w_j\lambda_{u_j}(n)\Phi_{X,Y,M}(t_j)=
    \delta(n,1)G(X,Y,M)+O(X^{\varepsilon}Y),
    \end{equation}
    where $w_j=\frac{2\pi}{L(1,\sym^2 u_j)}$, and $G(X,Y,M)=\frac{2}{\pi}XY+\frac{1}{\pi}Y^2
    +O(MY)$.
\end{lemma}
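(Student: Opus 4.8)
The statement to prove is an averaged Hecke-eigenvalue estimate of Kuznetsov type. Let me sketch how I would establish it.

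=== PROOF PROPOSAL ===

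The plan is to apply the Kuznetsov trace formula \eqref{eqn:KTF} directly with the test function $h(t)=\Phi_{X,Y,M}(t)$ and with $m=1$, then estimate each of the three resulting pieces. First I would check that $\Phi_{X,Y,M}$ meets the admissibility conditions (i)--(ii): since $h(t,T,M)$ is a Gaussian in $t$, it extends holomorphically to any horizontal strip, and because $\Phi_{X,Y,M}$ is built by averaging these Gaussians over $T\in[X,X+Y]$, it is even, holomorphic in $|\Im(t)|\le 1/2+\varepsilon$, and decays faster than any polynomial outside the bulk range $[X,X+Y]$ (by the stated asymptotics for $\Phi_{X,Y,M}$), so condition (ii) is comfortably satisfied. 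With $m=1$ we have $\lambda_{u_j}(1)=1$ and $\eta_t(1)=1$, so the spectral side becomes exactly $\sum_{j\ge1}w_j\lambda_{u_j}(n)\Phi_{X,Y,M}(t_j)$ plus the Eisenstein contribution $\int_{-\infty}^{\infty} w(t)\Phi_{X,Y,M}(t)\eta_t(n)\,\dd t$.

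The main term comes from the diagonal $\delta(n,m)=\delta(n,1)$ term on the geometric side, namely $\frac{\delta(n,1)}{\pi}\int_{-\infty}^{\infty}\Phi_{X,Y,M}(t)\tanh(\pi t)\,t\,\dd t$. Here I would use $\tanh(\pi t)=1+O(e^{-2\pi|t|})$ and the fact that $\Phi_{X,Y,M}(t)$ is essentially the indicator of $[X,X+Y]$ (with smooth transitions of width $O(M\sqrt{\log X})$) to evaluate $\int \Phi_{X,Y,M}(t)\,t\,\dd t$. The bulk integral $\int_X^{X+Y} t\,\dd t=XY+\tfrac12 Y^2$, and the boundary transition regions contribute $O(MY)$ by the third case of the $\Phi_{X,Y,M}$ asymptotics; dividing by $\pi$ gives exactly $G(X,Y,M)=\frac{2}{\pi}XY+\frac1\pi Y^2+O(MY)$. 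Wait — I should be careful: the integral runs over $\mathbb{R}$, so the Gaussian averaging picks up both $\pm T$, which is precisely why the factor works out to $\frac{2}{\pi}XY$ rather than $\frac{1}{\pi}XY$; I would track this symmetry carefully.

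The two error contributions are the Eisenstein integral and the Kloosterman sum. For the Kloosterman term $\sum_{c\ge1}\frac{S(n,1;c)}{c}\mathcal{J}\!\left(\frac{4\pi\sqrt{n}}{c}\right)$, I would insert the standard bounds on the Bessel transform $\mathcal{J}(x)$ (following the treatment in \cite{IvicJutila03}): for the relevant argument range the transform is controlled by the support properties of $\Phi_{X,Y,M}$, giving decay that, combined with Weil's bound $S(n,1;c)\ll c^{1/2+\varepsilon}(n,c)^{1/2}$ and summation over $c$, produces a total of size $O(X^\varepsilon Y)$ provided $n\ll M^2X^{2-\varepsilon}$ — this is exactly the role of the hypothesis on $n$, which keeps $\frac{4\pi\sqrt n}{c}$ in the regime where the oscillatory $\mathcal{J}$ integral is small. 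For the Eisenstein term I would bound $|\eta_t(n)|\ll n^\varepsilon$ by divisor bounds and use $w(t)=|\zeta(1+2it)|^{-2}\ll (\log(2+|t|))^2$, so that $\int \Phi_{X,Y,M}(t)\,w(t)\,n^\varepsilon\,\dd t\ll n^\varepsilon Y(\log X)^2\ll X^\varepsilon Y$, since $\Phi_{X,Y,M}$ is supported essentially on an interval of length $Y$. I expect the \textbf{Kloosterman sum} to be the main obstacle: getting the clean $O(X^\varepsilon Y)$ requires a careful stationary-phase or integration-by-parts analysis of $\mathcal{J}(x)$ against the averaged Gaussian $\Phi_{X,Y,M}$, and the delicate point is ensuring the constraint $n\ll M^2X^{2-\varepsilon}$ is exactly what forces the off-diagonal to be dominated by the error term rather than competing with the main term $G(X,Y,M)$. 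Fortunately the analogous computation is carried out in \cite{IvicJutila03} and \cite{BHS2021}, so I would adapt their Bessel-transform estimates rather than reprove them from scratch.
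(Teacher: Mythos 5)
Your proposal follows essentially the same route as the paper's proof: by linearity, applying Kuznetsov \eqref{eqn:KTF} with test function $\Phi_{X,Y,M}$ is identical to the paper's step of unfolding $\Phi_{X,Y,M}$ into the $T$-average of $h(t,T,M)$, and you then treat the three geometric-side pieces exactly as the paper does --- diagonal main term, continuous spectrum via $w(t)\ll(1+|t|)^{\varepsilon}$ and $|\eta_t(n)|\le\tau(n)$, and the Kloosterman term via Weil's bound plus the Bessel-transform estimates from the literature ($|\mathcal{J}(y)|\ll Ty^{3/4}$, and $|\mathcal{J}(y)|\ll T^{-A}$ for $y\ll MT^{1-\varepsilon}$, which is precisely where $n\ll M^2X^{2-\varepsilon}$ enters). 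One caution: to obtain the stated error $O(MY)$ in $G(X,Y,M)$ you should compute the diagonal term as the paper does, namely evaluate $\int_{-\infty}^{\infty}h(t,T,M)\tanh(\pi t)\,t\,\dd t=2\sqrt{\pi}\bigl(TM+O(M^2)\bigr)$ exactly and then average over $T\in[X,X+Y]$; replacing $\Phi_{X,Y,M}$ by an indicator and bounding the transition zones by the quoted three-case asymptotics only gives an edge error of size $O(XM)$, which is weaker than $O(MY)$ when $Y\ll X$ (the antisymmetric cancellation of the error-function profile about each edge, which the crude asymptotics do not record, is what saves the day).
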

\begin{proof}
The left hand side is equal to
\begin{equation*}
    \frac{1}{\sqrt{\pi}M}\int_{X}^{X+Y}\sum_{j\geq 1}w_j\lambda_{u_j}(n)h(t_j, T, M)\dd T.
\end{equation*}
 Applying the Kuznetsov trace formula (\ref{eqn:KTF}) with the weight function $h(t, T, M)$, we have
    \begin{equation}
    \begin{split}
           \sum_{j\geq 1}w_j\lambda_{u_j}(n)&h(t_j, T ,M)+\int_{-\infty}^{\infty}h(t, T ,M)w(t)\eta_{t}(n) \dd t\\
           &=\delta(n,1)\frac{1}{\pi}
           \int_{-\infty}^{\infty}h(t, T ,M)\tanh(\pi t)t \dd t+\sum_{c\geq 1}\frac{S(n, 1; c)}{c}
           \mathcal{J}\left(\frac{4\pi\sqrt{n}}{c}\right),
    \end{split}
    \end{equation}
    where
    \begin{equation}
      \mathcal{J}(x)= 2i \int_{-\infty}^{\infty} J_{2it}(x)\frac{h(t, T ,M)t}{\cosh(\pi t)} \dd t.
    \end{equation}
Recall that $w(t)=\frac{1}{|\zeta(1+2it)|^2}\ll (1+|t|)^{\varepsilon}$, the integration of continuous spectrum contributes
\[
     \frac{1}{\sqrt{\pi}M}\int_{X}^{X+Y}\int_{-\infty}^{\infty}h(t, T ,M)w(t)\eta_{t}(n) \dd t\,\dd T
     \ll X^{\varepsilon}\int_{|t-X|\ll Y}\Phi_{X,Y,M}(t)\dd t+O(X^{-A})
     \ll X^{\varepsilon}Y.
\]
By the uniform bound $|\mathcal{J}(y)|\ll T y^{3/4}$ and the bound $|\mathcal{J}(y)|\ll T^{-A}$ for $y\ll MT^{1-\varepsilon}\asymp MX^{1-\varepsilon}$ and any large $A>0$ \cite[Lemma 7.1]{Young2017weyl}, in addition, the Weil bound of the Kloosterman sum
$S(n, 1; c)\leq\tau(c)c^{\frac{1}{2}}$,
we get that the off-diagonal term is bounded by
\[
\sum_{c\leq T^{A/2}}+\sum_{c> T^{A/2}}\frac{1}{c^{\frac{1}{2}-\varepsilon}}
\left|\mathcal{J}\left(\frac{4\pi\sqrt{n}}{c}\right)
\right|\ll\sum_{c\leq T^{A/2}}\frac{1}{c^{\frac{1}{2}-\varepsilon}}
\frac{1}{T^{A}}+\sum_{c> T^{A/2}}\frac{T}{c^{\frac{1}{2}-\varepsilon}}
\left(\frac{4\pi\sqrt{n}}{c}\right)
^{\frac{3}{4}}\ll T^{-A/9}.
\]
Finally, the integration of the diagonal term contributes
\begin{equation*}
\begin{split}
      \delta(n,1)\frac{1}{\pi^{\frac{3}{2}}M} \int_{X}^{X+Y}\int_{-\infty}^{\infty}&h(t, T ,M)\tanh(\pi t)t \dd t\,\dd T\\
      &=  \delta(n,1)\frac{2}{\pi M}
      \int_{X}^{X+Y}
      (TM+O(M^2))\dd T\\
      &= \frac{\delta(n,1)}{\pi}
      \left(2XY+Y^2+O(MY)\right).
\end{split}
\end{equation*}
Combining the above estimates, we finish the proof of Lemma \ref{lemma:3.1}.
\end{proof}

\subsection{Bounds for $L$-values}

Let $\alpha_f,\beta_f$, $\alpha_g,\beta_g$, and $\alpha_j,\beta_j$ denote the Satake parameters for $f$, $g$, and $u_j$, respectively.
Let
\begin{equation}\label{def:Lambdauj}
  \Lambda_{u_j}(p^n)=\alpha_j(p)^n+\beta_j(p)^n,
\end{equation}
\begin{equation}\label{def:Lambdasym2fuj}
  \Lambda_{\sym^2f\times u_j}(p^n)
  =(\alpha_f(p)^{2n}+1+\beta_f(p)^{2n})
  (\alpha_j(p)^n+\beta_j(p)^n),
\end{equation}
and
\begin{equation}\label{def:Lambdasym2guj}
  \Lambda_{\sym^2g\times u_j}(p^n)
  =(\alpha_g(p)^{2n}+1+\beta_g(p)^{2n})
  (\alpha_j(p)^n+\beta_j(p)^n).
\end{equation}
In particular, we have
\begin{equation}\label{eqn:heckerelation}
  \Lambda_{\sym^2f\times u_j}(p^2)
  =(\lambda_{\sym^4f}(p)-\lambda_{\sym^2f}(p)+1)
  (\lambda_{\sym^2 u_j}(p)-1),
\end{equation}
and a similar result for $\Lambda_{\sym^2g\times u_j}(p^2)$.

\begin{lemma}\label{lemma:LogLfunctions}
  Assume GRH. Let $f,g\neq u_j$ be Hecke--Maass cusp forms over $\SL_2(\mathbb{Z})$ with spectral parameter $X\leq t_j\leq 2X$. Then for $x>10$, we have
    \begin{equation}\label{eqn:logLfunctions2}
    \log  L\Big(\frac{1}{2},u_j\Big)\leq
    \sum_{p^n\leq x}\frac{\Lambda_{u_j}(p^n)}
    {np^{n(\frac{1}{2}+\frac{1}{\log x})}}
    \frac{\log \frac{x}{p^n}}{\log x}
    +O\left(\frac{\log X}{\log x}+1\right),
  \end{equation}
  with an absolute implied constant, and
  \begin{equation}\label{eqn:logLfunctions2a}
    \log  L\Big(\frac{1}{2},\sym^2f\times u_j\Big)\leq
    \sum_{p^n\leq x}\frac{\Lambda_{\sym^2f\times u_j}(p^n)}
    {np^{n(\frac{1}{2}+\frac{1}{\log x})}}
    \frac{\log \frac{x}{p^n}}{\log x}
    +O\left(\frac{\log (X+t_f)}{\log x}+1\right),
  \end{equation}
    \begin{equation}\label{eqn:logLfunctions2b}
    \log  L\Big(\frac{1}{2},\sym^2 g\times u_j\Big)\leq
    \sum_{p^n\leq x}\frac{\Lambda_{\sym^2 g\times u_j}(p^n)}
    {np^{n(\frac{1}{2}+\frac{1}{\log x})}}
    \frac{\log \frac{x}{p^n}}{\log x}
    +O\left(\frac{\log (X+t_g)}{\log x}+1\right).
  \end{equation}
\end{lemma}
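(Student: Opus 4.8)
The plan is to deduce all three inequalities from a single conditional (GRH) upper bound of Soundararajan type \cite{soundararajan2009moments} for $\log|L(1/2,\pi)|$ in terms of a short, smoothed sum over prime powers, applied in turn to the three self-dual $L$-functions $L(s,u_j)$, $L(s,\sym^2f\times u_j)$, and $L(s,\sym^2g\times u_j)$. Since $u_j$, $f$, $g$ are Hecke--Maass forms over $\SL_2(\mathbb{Z})$, these $L$-functions are self-dual, so their central values are real; by Lapid's nonnegativity they are $\geq 0$, whence $\log L(1/2,\cdot)=\log|L(1/2,\cdot)|$ (the asserted inequality being trivial when the value vanishes). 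The Dirichlet coefficients read off from the Euler products are precisely $\Lambda_{u_j}$, $\Lambda_{\sym^2f\times u_j}$, $\Lambda_{\sym^2g\times u_j}$ as defined in \eqref{def:Lambdauj}--\eqref{def:Lambdasym2guj}, so the task reduces to justifying the common shape of the right-hand side and identifying the conductor-dependent error in each case.

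First I would set up the main inequality. Starting from the Hadamard factorization of the completed $L$-function, one expresses $\Re\frac{L'}{L}(\sigma+it,\pi)$ as a sum over nontrivial zeros plus archimedean contributions; under GRH every zero is $\rho=\tfrac12+i\gamma$, so each term $\Re\frac{1}{\sigma+it-\rho}=\frac{\sigma-1/2}{(\sigma-1/2)^2+(t-\gamma)^2}$ is nonnegative for $\sigma>\tfrac12$. Integrating $\frac{\partial}{\partial\sigma}\log|L(\sigma+it,\pi)|=\Re\frac{L'}{L}(\sigma+it,\pi)$ outward from the central line and discarding the favorably signed zero sum yields a one-sided bound for $\log|L(1/2,\pi)|$ by the logarithmic derivative slightly to the right together with a conductor term. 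I would then convert the logarithmic derivative into the smoothed prime-power sum: the Fejér-type kernel identity
\[
  \frac{1}{2\pi i}\int_{(c)}\frac{z^w}{w^2}\,\dd w=\max(\log z,0),\qquad c>0,
\]
applied with $z=x/p^n$ produces exactly the weight $\frac{\log(x/p^n)}{\log x}$ and the truncation $p^n\leq x$, while taking the contour along $\Re(s)=\tfrac12+\tfrac{1}{\log x}$ yields the shifted denominator $p^{n(1/2+1/\log x)}$. This gives the displayed main term in each of \eqref{eqn:logLfunctions2}, \eqref{eqn:logLfunctions2a}, \eqref{eqn:logLfunctions2b}.

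The remaining step is to extract the error terms from the analytic conductors. By Stirling applied to the archimedean factors, the logarithm of the analytic conductor of $L(s,u_j)$ near $s=\tfrac12$ is $\asymp\log t_j\asymp\log X$, using $X\leq t_j\leq 2X$; for $L(s,\sym^2f\times u_j)$ the gamma factors carry the parameters $\pm 2t_f\pm t_j$ and $\pm t_j$, so the conductor is polynomial in $t_f$ and $t_j$, giving $\log(t_f+t_j)\asymp\log(X+t_f)$, and likewise $\log(X+t_g)$ in the third case. Dividing the conductor term by $\log x$ produces the stated $O\big(\tfrac{\log X}{\log x}+1\big)$, $O\big(\tfrac{\log(X+t_f)}{\log x}+1\big)$, $O\big(\tfrac{\log(X+t_g)}{\log x}+1\big)$, where the ``$+1$'' absorbs the bounded archimedean constants and the effect of the $\tfrac{1}{\log x}$ shift.

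The hard part will be the uniform bookkeeping of the archimedean and conductor contributions over the whole spectral window $X\leq t_j\leq 2X$ and over the two auxiliary forms, so that the implied constants are absolute and the degree-$6$ Rankin--Selberg cases $\sym^2f\times u_j$ and $\sym^2g\times u_j$ give errors of the same shape as the degree-$2$ case; this requires computing the analytic conductor of the convolution carefully. The analytic input, namely GRH for the three $L$-functions, is used only to secure the favorable sign of the zero sum and is assumed in the hypothesis, so no subconvexity or zero-density estimate enters here.
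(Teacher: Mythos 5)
Your overall plan is the right one, and in fact it is exactly the method behind the paper's proof: the paper disposes of this lemma with a one-line citation to Chandee's Theorem 2.1 and to Lemma 6.4 of Huang--Lester, and those results are proved by the Soundararajan-type argument you sketch (Hadamard factorization, positivity of the zero terms under GRH, a Perron/Fej\'er smoothing, Stirling for the archimedean factors). Your remarks on self-duality and nonnegativity (so that the inequality is trivial when the central value vanishes), and your identification of the analytic conductors giving $\log X$, $\log(X+t_f)$, $\log(X+t_g)$ for the three $L$-functions, are correct.

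There is, however, a genuine gap in your treatment of the zeros, and it sits precisely at the step you dismiss as routine. You propose to discard the favorably signed zero sum immediately after integrating $\Re\frac{L'}{L}(\sigma,\pi)$ from $\sigma=\tfrac12$ to $\sigma_0=\tfrac12+\lambda/\log x$ (note also that this integration bounds $\log L(\tfrac12,\pi)$ by the \emph{value} $\log|L(\sigma_0,\pi)|$, not by the logarithmic derivative), and you then present the kernel identity $\frac{1}{2\pi i}\int_{(c)}z^w w^{-2}\,\dd w=\max(\log z,0)$ as if it converts $\log|L(\sigma_0,\pi)|$ into the smoothed prime sum with no further error. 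But performing that conversion requires shifting the contour across all nontrivial zeros, which reintroduces terms of the shape $\sum_\rho\int_{\sigma_0}^{\infty}x^{\rho-\sigma}(\rho-\sigma)^{-2}\,\dd\sigma$; these are \emph{not} sign-definite, and they are only bounded by $\frac{e^{-\lambda}}{\lambda\log x}F(\sigma_0)$, where $F(\sigma_0)=\sum_\gamma\frac{\sigma_0-1/2}{(\sigma_0-1/2)^2+\gamma^2}$ can be as large as $\log x\log C(\pi)$. Bounding them trivially therefore produces an error $O(\log C(\pi))$, which is a factor $\log x$ larger than the $O\big(\frac{\log C(\pi)}{\log x}+1\big)$ claimed in the lemma --- fatal in the application, where $\log x$ is as small as a power of $\log\log X$. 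The actual proofs keep track of the zero sums with their signs: the negative multiple of $F(\sigma_0)$ arising from $\Re\frac{L'}{L}(\sigma_0,\pi)$ (and, for small shifts, from the retained integral $\int_{1/2}^{\sigma_0}F(\sigma)\,\dd\sigma\geq\frac{\sigma_0-1/2}{2}F(\sigma_0)$) must absorb the contour-shift terms, which is exactly where Soundararajan's condition $e^{-\lambda}\leq\lambda+\lambda^2/2$, i.e.\ $\lambda\geq\lambda_0\approx 0.49$, enters; the shift $1/\log x$ (that is, $\lambda=1$) in the lemma's statement is admissible. So the delicate bookkeeping is in the zeros, not, as you suggest, in the archimedean factors, whose treatment really is routine Stirling. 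With this mechanism restored, your argument becomes the standard proof of the cited results.
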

\begin{proof}
  See \cite[Theorem 2.1]{Chandee} and \cite[Lemma 6.4]{HL23}.
\end{proof}

\begin{lemma}\label{lemma:sumofcoefficients}
  Assume GRH and GRC.
  Let $f,g$ be two distinct Hecke--Maass cusp forms over $\SL_2(\mathbb{Z})$.
  For $x\geq 2$ and $f,g\ncong u_j$ we have
\begin{equation}\label{eqn:sumofcoefficients1a}
  \sum_{p\leq x}\frac{\lambda_{\sym^4f}(p)\lambda_{\sym^2 u_j}(p)}{p}=O(\log\log\log (t_j+t_f)),
  \end{equation}
\begin{equation}\label{eqn:sumofcoefficients1b}
    \sum_{p\leq x}\frac{\lambda_{\sym^4g}(p)
    \lambda_{\sym^2 u_j}(p)}{p}=O(\log\log\log (t_j+t_g)),
  \end{equation}
\begin{equation}\label{eqn:sumofcoefficients2a}
    \sum_{p\leq x}\frac{\lambda_{\sym^2f}(p)
    \lambda_{\sym^2 u_j}(p)}{p}=O(\log\log\log (t_j+t_f)),
\end{equation}
\begin{equation}\label{eqn:sumofcoefficients2b}
    \sum_{p\leq x}\frac{\lambda_{\sym^2 g}(p)\lambda_{\sym^2 u_j}(p)}{p}=O(\log\log\log (t_j+t_g)),
\end{equation}
\begin{equation}\label{eqn:sumofcoefficients3}
    \sum_{p\leq x}\frac{\lambda_{\sym^2 f}(p)
    \lambda_{\sym^2 g}(p) }{p}=O(\log\log\log(t_f+t_g)),
\end{equation}
\begin{equation}\label{eqn:sumofcoefficients4a}
    \sum_{p\leq x}\frac{\lambda_{\sym^2 f}(p)}{p}=O(\log\log\log t_f),
\end{equation}
\begin{equation}\label{eqn:sumofcoefficients4b}
    \sum_{p\leq x}\frac{\lambda_{\sym^2 g}(p)}{p}=O(\log\log\log t_g),
\end{equation}
\begin{equation}\label{eqn:sumofcoefficients4c}
    \sum_{p\leq x}\frac{\lambda_{\sym^2 u_j}(p)}{p}=O(\log\log\log t_j).
\end{equation}
\end{lemma}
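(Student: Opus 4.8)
The plan is to treat all nine estimates by a single mechanism. In each case the summand is (up to the factor $\log p$) the $p$-th coefficient of $-L'/L(s,\Pi)$ for an appropriate automorphic $L$-function $L(s,\Pi)$: for a product $\lambda_{\sym^a f}(p)\lambda_{\sym^b u_j}(p)$ I would take the Rankin--Selberg convolution $\Pi=\sym^a f\times\sym^b u_j$, since $\prod_{i,k}(1-\alpha_i\alpha'_k p^{-s})^{-1}$ has $p$-th Dirichlet coefficient $(\sum_i\alpha_i)(\sum_k\alpha'_k)=\lambda_{\sym^a f}(p)\lambda_{\sym^b u_j}(p)$; for $\lambda_{\sym^2 f}(p)\lambda_{\sym^2 g}(p)$ take $\Pi=\sym^2 f\times\sym^2 g$; and for the single coefficients take $\Pi=\sym^2 f,\ \sym^2 g,\ \sym^2 u_j$. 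By the functoriality of Gelbart--Jacquet, Kim--Shahidi and Kim, together with Jacquet--Piatetski-Shapiro--Shalika, each $\Pi$ is an isobaric $L$-function of bounded degree with analytic conductor $C(\Pi)\ll(t_f+t_g+t_j)^{O(1)}$ satisfying GRC (its Satake parameters being products of those of $f,g,u_j$). Thus every estimate reduces to the single uniform claim: under GRH and GRC, for an \emph{entire} $L(s,\Pi)$ of bounded degree,
\[
\sum_{p\le x}\frac{a_\Pi(p)}{p}\ll\log\log\log C(\Pi)\qquad(x\ge2),
\]
where $a_\Pi(p)$ is the $p$-th coefficient.

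To prove this key estimate I would split the sum at $y=(\log C(\Pi))^{5}$. For the small primes $p\le y$, GRC gives $|a_\Pi(p)|\ll1$, whence
\[
\sum_{p\le y}\frac{|a_\Pi(p)|}{p}\ll\sum_{p\le y}\frac1p\ll\log\log y\ll\log\log\log C(\Pi);
\]
this is exactly where the triple logarithm is produced, the cut being a fixed power of $\log C(\Pi)$. For the remaining primes (present only when $x\ge y$) I would write $\sum_{y<p\le x}=\sum_{p>y}-\sum_{p>x}$ and estimate each tail by partial summation from the GRH-conditional Chebyshev bound $\sum_{p\le t}a_\Pi(p)\log p=\psi_\Pi(t)+O(\sqrt t\log t)\ll\sqrt t\,(\log(C(\Pi)t))^2$, which holds precisely because $\Pi$ is entire and so contributes no main term. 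Since $y$ is a sufficiently large power of $\log C(\Pi)$, the ensuing tail integrals $\int_y^\infty(\log(C(\Pi)t))^2\,t^{-3/2}\,dt\ll(\log C(\Pi))^{-1/2}$ are $O(1)$, so the large-prime range is negligible and the two pieces combine to the asserted bound.

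The one genuinely arithmetic input, and the step I expect to require the most care, is verifying that each $L(s,\Pi)$ is entire; this is what rules out a spurious $\log\log$ main term and is where the distinctness hypotheses enter. A Rankin--Selberg $L(s,\pi\times\pi')$ has a pole at $s=1$ exactly when $\pi'\cong\tilde\pi$. For the mixed-degree convolutions such as $\sym^4 f\times\sym^2 u_j$ (a $\GL_5\times\GL_3$ object) this is impossible; for $\sym^2 f\times\sym^2 g$ and $\sym^2 f\times\sym^2 u_j$ (both $\GL_3\times\GL_3$) a pole would force $\sym^2 f\cong\sym^2 g$, respectively $\sym^2 f\cong\sym^2 u_j$, hence $f\cong g$, respectively $f\cong u_j$ (on $\SL_2(\mathbb{Z})$ there are no nontrivial quadratic twists and every form is non-dihedral, so the symmetric powers are cuspidal and the symmetric-square map is injective), both excluded by hypothesis; and $\sym^2 f,\sym^2 g,\sym^2 u_j$ are themselves cuspidal, hence entire. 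With entireness established in every case, the key estimate applies, and recalling $\log C(\Pi)\ll\log(t_f+t_g+t_j)$ gives the stated bounds. Finally, I would note that the exponent $5$ in the cut is immaterial: any fixed power of $\log C(\Pi)$ exceeding $4$ balances the trivial small-prime bound against the GRH tail and yields the triple logarithm.
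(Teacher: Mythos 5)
Your proposal is correct, and its overall architecture matches the paper's: identify each sum with the prime coefficients of a suitable symmetric-power Rankin--Selberg $L$-function, split at a fixed power of $\log$ of the analytic conductor, use GRC trivially on the small primes (this is where the triple logarithm arises in both arguments), and use GRH to show the large primes contribute $O(1)$. Where you genuinely diverge is in the GRH step. The paper works at the level of $\log L$: it applies Perron's formula to $\log L(s+1,\Pi)$, shifts the contour to $\Re(s)=-\frac{1}{2}+\frac{1}{\log x}$ using Littlewood's GRH bound $|\log L(s,\Pi)|\ll (\Re(s)-\frac{1}{2})^{-1}\log(C+|\Im(s)|)$, picks up the residue $\log L(1,\Pi)$ at $s=0$, and then eliminates this $x$-independent term by differencing the resulting formula at two values of $x$ --- so it never bounds $\log L(1,\Pi)$ itself and never invokes the explicit formula. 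You instead work at the level of $L'/L$: entireness of $L(s,\Pi)$ plus GRH give the Chebyshev-type bound $\psi_\Pi(t)\ll\sqrt{t}\,(\log (Ct))^2$ with no main term, and partial summation finishes. Both routes need the same arithmetic inputs (automorphy of $\sym^2$ and $\sym^4$, cuspidality, and Ramakrishnan's injectivity of $\sym^2$ to rule out poles --- the paper compresses these into ``similar arguments using $\sym^2 f\ncong\sym^2 g$,'' while you make them explicit and correctly locate where the hypotheses $f\ncong g$ and $f,g\ncong u_j$ enter), and both assume GRH for the same Rankin--Selberg $L$-functions. Your version buys a single uniform key estimate covering all nine bounds and makes the role of entireness transparent; the paper's version is lighter on analytic machinery (Littlewood's lemma rather than the explicit formula with zero counting) and yields as a byproduct the relation between the prime sum and $\log L(1,\Pi)$.
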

\begin{proof}
  We will only establish the first bound \eqref{eqn:sumofcoefficients1a}, since others follow from similar arguments by using some facts such as $\sym^2 f\ncong \sym^2 g$.  Indeed, from \cite{GelbartJacquet} we know $\sym^2 f$ and $\sym^2 g$ are self-dual Hecke--Maass cusp forms over $\SL_3(\mathbb{Z})$, and \cite{Ramakrishnan14} tells us $\sym^2 f\ncong \sym^2 g$.
  Note that $L(s,\sym^4f\times\sym^2u_j)$ has an analytic continuation to the complex plane and satisfies a functional equation.
  Assuming GRH for $L(s,\sym^4f\times\sym^2u_j)$, it follows that $\log L(s,\sym^4f\times\sym^2u_j)$ is analytic in the region $\Re (s)\geq \frac{1}{2}+\frac{1}{\log x}$. Moreover, by repeating a classical argument of Littlewood (see Titchmarsh \cite[(14.2.2)]{Ti86}), in this region we have
  \begin{equation}\label{eqn:log L <<}
  |\log L(s,\sym^4f\times\sym^2u_j)|
  \ll \Big(\Re(s)-\frac{1}{2}\Big)^{-1}\log (t_j+t_f+|\Im(s)|).
  \end{equation}
  By Perron's formula, we have for $x\geq 2$ that
  \begin{multline}
    \sum_{p\leq x}\frac{\lambda_{\sym^4f}(p)
    \lambda_{\sym^2 u_j}(p)}{p}\\
    =\frac{1}{2\pi i}
    \int_{1-ix\log(t_j+t_f+x)}
    ^{1+ix\log(t_j+t_f+x)}
    \log L(s+1,\sym^4f\times\sym^2u_j)
    x^s\frac{\dd s}{s}+O(1).
  \end{multline}
  Shifting contours to $\Re(s)=-\frac{1}{2}+\frac{1}{\log x}$ we collect a simple pole at $s=0$ with residue $\log L(1,\sym^4f\times\sym^2u_j)$. The upper horizonal contour is bounded by
  \begin{multline}
    \ll\frac{1}{x\log(t_f+t_j+x)}
    \int_{-\frac{1}{2}+\frac{1}{\log x}
    +ix\log(t_j+t_f+x)}^{1+ix\log(t_j+t_f+x)}
    |\log L(s+1,\sym^4f\times\sym^2u_j)|
    |x^s| |\dd s|\\
    \ll \frac{\log x\log(t_f+t_j+x\log(t_j+t_f+x))}
    {x\log(t_j+t_f+x)}
    \int_{-\frac{1}{2}}^{1}x^u \dd u\ll 1,
  \end{multline}
and the lower horizontal contour is also $O(1)$.
Hence by \eqref{eqn:log L <<} we have for $x\geq 2$ that
\begin{multline}
   \sum_{p\leq x}\frac{\lambda_{\sym^4f}(p)
    \lambda_{\sym^2 u_j}(p)}{p}\\
    =\log L(1,\sym^4f\times\sym^2u_j)
    +O\Big(\frac{\log x}{\sqrt{x}}
    \int_{-x\log(t_j+t_f+x)}^{x\log(t_j+t_f+x)}
    \frac{\log (t_j+t_f+u)}{1+|u|}\dd u \Big).
\end{multline}
  Applying the above estimate twice we have for $z\geq (\log(t_j+t_f))^3$ that
  \begin{equation}
    |\sum_{(\log (t_f+t_j))^3<p\leq z}
    \frac{\lambda_{\sym^4f}(p)
    \lambda_{\sym^2 u_j}(p)}{p}|
    \ll 1.
  \end{equation}
  Assuming GRC, for $y\leq (\log (t_f+t_j))^3$ we have
  \begin{equation}
   |\sum_{p\leq y}
    \frac{\lambda_{\sym^4f}(p)
    \lambda_{\sym^2 u_j}(p)}{p}|
    \ll
    \log\log\log (t_f+t_j).
  \end{equation}
  This completes the proof.
\end{proof}

Using Lemma \ref{lemma:sumofcoefficients}, for $2\leq y\leq x$, $\ell_1,\ell_2,\ell_3>0$ and distinct Hecke--Maass forms $f,g$, we have that
\begin{multline}\label{eqn:sumofsumoverp}
  \sum_{y<p\leq x}\frac{(\ell_1+\ell_2\lambda_{\sym^2 f}(p)+\ell_3\lambda_{\sym^2 g}(p))^2}{p}
  =(\ell_1^2+\ell_2^2+\ell_3^2)\log\frac{\log x}{\log y}+O(\log\log\log (t_f+t_g)).
\end{multline}

\subsection{Bounds for moments of Dirichlet polynomials}

\begin{lemma}\label{lemma:sumusesummationformula}
  Let $r\in\mathbb{N}$. Then for $x\leq X^{\frac{1}{10r}}$ and real numbers $a_p\ll p^{\frac{1}{2}-\delta}$ for some $\delta >0$, we have that

  \begin{multline}
    \sum_{j} \frac{\Phi_{X,Y,M}(t_j)}{L(1,\sym^2u_j)}
    \Big(\sum_{p\leq x}
    \frac{a_p\lambda_{u_j}(p)}
    {p^{\frac{1}{2}}}\Big)^{2r}
    \ll G(X,Y,M)\frac{(2r)!}{r!2^r}
    \Big(\sum_{p\leq x}\frac{a_p^2}{p}\Big)^r
    \\
    +\frac{2^{2r}(2r)!}{r!}
    X^{1+\varepsilon}
    x^{2r(\frac{1}{2}-\delta)}.
  \end{multline}
\end{lemma}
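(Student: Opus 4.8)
The plan is to open up the $2r$th power, linearise the resulting products of Hecke eigenvalues, and feed each piece into Lemma~\ref{lemma:3.1}. Writing $w_j=2\pi/L(1,\sym^2u_j)$ so that $\Phi_{X,Y,M}(t_j)/L(1,\sym^2u_j)=w_j\Phi_{X,Y,M}(t_j)/(2\pi)$, I would first expand
\[
  \Big(\sum_{p\leq x}\frac{a_p\lambda_{u_j}(p)}{p^{1/2}}\Big)^{2r}
  =\sum_{p_1,\dots,p_{2r}\leq x}\frac{a_{p_1}\cdots a_{p_{2r}}}{(p_1\cdots p_{2r})^{1/2}}\,\lambda_{u_j}(p_1)\cdots\lambda_{u_j}(p_{2r}),
\]
and then use Hecke multiplicativity to write each product as $\lambda_{u_j}(p_1)\cdots\lambda_{u_j}(p_{2r})=\sum_n c_{\vec p}(n)\lambda_{u_j}(n)$, where the $c_{\vec p}(n)$ are nonnegative integers supported on $n$ dividing $p_1\cdots p_{2r}$, so in particular $n\leq x^{2r}$. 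Two elementary facts about these coefficients drive the argument: the total mass satisfies $\sum_n c_{\vec p}(n)\leq 2^{2r}$ (for a single prime of multiplicity $k$ the Hecke expansion of $\lambda_{u_j}(p)^k$ has coefficient sum $\binom{k}{\lfloor k/2\rfloor}\leq 2^{k}$, and one multiplies over primes), and $c_{\vec p}(1)\neq 0$ only when every prime in $\vec p$ occurs with even multiplicity.

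The crucial structural input is that the hypothesis $x\leq X^{1/(10r)}$ forces $n\leq x^{2r}\leq X^{1/5}\ll M^2X^{2-\varepsilon}$, so Lemma~\ref{lemma:3.1} applies uniformly to \emph{every} $n$ in the expansion, giving $\sum_j w_j\lambda_{u_j}(n)\Phi_{X,Y,M}(t_j)=\delta(n,1)G(X,Y,M)+O(X^{\varepsilon}Y)$. Substituting this and collecting the $n=1$ terms produces the main term, a constant multiple of $G(X,Y,M)\sum_{\vec p}\frac{a_{p_1}\cdots a_{p_{2r}}}{(p_1\cdots p_{2r})^{1/2}}c_{\vec p}(1)$. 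To bound the inner sum I would argue combinatorially: $c_{\vec p}(1)$ is at most the number of perfect matchings of $\{1,\dots,2r\}$ pairing indices that carry equal primes, so summing over $\vec p$ is dominated by first choosing one of the $(2r-1)!!$ matchings and then summing $a_p^2/p$ over each of the $r$ pairs. This yields
\[
  \sum_{\vec p}\frac{a_{p_1}\cdots a_{p_{2r}}}{(p_1\cdots p_{2r})^{1/2}}\,c_{\vec p}(1)\leq (2r-1)!!\Big(\sum_{p\leq x}\frac{a_p^2}{p}\Big)^r=\frac{(2r)!}{r!\,2^r}\Big(\sum_{p\leq x}\frac{a_p^2}{p}\Big)^r,
\]
which is exactly the first term of the claimed bound (the stray constant $1/2\pi$ being harmless).

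It remains to control the off-diagonal contribution, and this is where the real work lies. Crudely, using $\sum_n c_{\vec p}(n)\leq 2^{2r}$ together with the $O(X^{\varepsilon}Y)$ error of Lemma~\ref{lemma:3.1}, the total error is of the form $\ll X^{\varepsilon}Y\,2^{2r}\big(\sum_{p\leq x}|a_p|p^{-1/2}\big)^{2r}$, into which one inserts the hypothesis $|a_p|\ll p^{1/2-\delta}$ and the bound $Y\leq X$. The delicate point, and the main obstacle, is to convert this into the sharper stated shape $\frac{2^{2r}(2r)!}{r!}X^{1+\varepsilon}x^{2r(1/2-\delta)}$: the naive estimate $\sum_{p\leq x}|a_p|p^{-1/2}\ll x^{1-\delta}$ loses a factor of roughly $x^{r}$, so one must treat the off-diagonal $n>1$ more carefully, exploiting that the admissible range $x\leq X^{1/(10r)}$ (which is precisely what keeps $n$ inside the validity range of Lemma~\ref{lemma:3.1}) forces the power of $x$ to stay below the main term. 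Thus the two steps demanding the most care are the linearisation of the products $\lambda_{u_j}(p_1)\cdots\lambda_{u_j}(p_{2r})$ into integers $n$ that remain admissible for the trace-formula input, and the simultaneous tracking of the factorial growth of the combinatorial constants against the power of $x$ in the off-diagonal remainder.
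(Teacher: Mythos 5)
Your expansion, the linearisation of $\lambda_{u_j}(p_1)\cdots\lambda_{u_j}(p_{2r})$ into $\sum_n c_{\vec p}(n)\lambda_{u_j}(n)$, the observation that $x\leq X^{\frac{1}{10r}}$ keeps every index $n\leq x^{2r}\leq X^{1/5}$ inside the admissible range of Lemma~\ref{lemma:3.1}, and your diagonal treatment coincide with the paper's proof: the paper works with the explicit coefficients $D_{k,l}=\frac{k!(l+1)}{(\frac{k+l}{2}+1)!(\frac{k-l}{2})!}$, extracts the terms with all $l_i=0$ (possible only when all multiplicities $e_i$ are even), and bounds the Catalan numbers $D_{2f,0}$ using $(f+1)!\geq 2^f$ --- exactly your ``Catalan number $\leq$ number of matchings'' inequality --- so both routes give the main term $\frac{(2r)!}{r!2^r}\big(\sum_{p\leq x}a_p^2/p\big)^rG(X,Y,M)$. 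Up to that point your argument is correct and is the paper's argument.

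The gap is exactly where you place it, and it is genuine: you stop at the crude off-diagonal bound $X^{\varepsilon}Y\,2^{2r}\big(\sum_{p\leq x}|a_p|p^{-1/2}\big)^{2r}\ll X^{\varepsilon}Y\,2^{2r}x^{2r(1-\delta)}$ and concede that you cannot reach $x^{2r(1/2-\delta)}$, so the stated lemma is not proved. What the paper does at this step is to bound the off-diagonal contribution of each monomial $n$ by its combinatorial weight times $\frac{|a_n|}{n}X^{1+\varepsilon}$ --- note $|a_n|/n$, not $|a_n|/n^{1/2}$ --- so that each prime factor contributes $\sum_{p\leq x}|a_p|/p\ll x^{1/2-\delta}$ rather than $\sum_{p\leq x}|a_p|/p^{1/2}\ll x^{1-\delta}$; combined with the identities $\sum_{l\equiv e\,(2)}D_{e,l}=\binom{e}{\lfloor e/2\rfloor}$ and the bound $\sum_{e_1+\cdots+e_q=2r}\frac{(2r)!}{\prod e_i!}\prod_i\binom{e_i}{\lfloor e_i/2\rfloor}<2^{4r+1}$, this yields the stated error $2^{4r}X^{1+\varepsilon}x^{2r(\frac12-\delta)}$. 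Since the coefficient produced by the expansion is $a_n/n^{1/2}$ and Lemma~\ref{lemma:3.1} gives an error $O(X^{\varepsilon}Y)$ per index, the paper's step amounts to $X^{\varepsilon}Y|a_n|/n^{1/2}\leq X^{1+\varepsilon}|a_n|/n$, i.e.\ $n^{1/2}Y\ll X$, which is automatic (given $n\leq X^{1/5}$) when $Y\leq X^{4/5}$ but not when $Y$ is close to $X$; in that regime one closes the gap the way your instinct suggested, by refining the error of Lemma~\ref{lemma:3.1} itself: its $O(X^{\varepsilon}Y)$ term comes from the continuous spectrum and is only of that size when the Hecke index $m=\prod p_i^{l_i}$ is a perfect square, which forces all $e_i$ even and re-installs the $\big(\sum_p a_p^2/p\big)^r\ll x^{2r(1/2-\delta)}$ structure, while non-square $m$ gain cancellation in the $T$-average over $[X,X+Y]$. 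So the missing step is recoverable, but it requires this additional input and is not a routine completion of what you wrote.
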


\begin{proof}
  Using $\lambda_{u_j}(p^l)=\sum_{0\leq m\leq l}\alpha_j(p)^{m}\beta_j(p)^{l-m}$,
  we have
  \begin{equation*}
    \lambda_{u_j}(p)^k=(\alpha_j(p)+\beta_j(p))^k
    =\sum_{\substack{0\leq l\leq k\\l\equiv k\pmod{2}}}D_{k,l}\lambda_{u_j}(p^l),
  \end{equation*}
  where
  \begin{equation*}
  D_{k,l}=\frac{k!}
  {(\frac{k+l}{2})!(\frac{k-l}{2})!}-\sum_{0< m\leq \frac{k-l}{2}}D_{k,l+2m},
  \end{equation*}
  and $D_{k,k}=1$,
  so \begin{equation*}
  D_{k,l}=\frac{k!(l+1)}
  {(\frac{k+l}{2}+1)!(\frac{k-l}{2})!}.
  \end{equation*}
  Let $a_n=\prod_{p^j\|n}a_p^j$. Then we have
  \begin{multline}\label{eqn:counting}
    \sum_{j} \frac{\Phi_{X,Y,M}(t_j)}{L(1,\sym^2u_j)}
    \Big(\sum_{p\leq x}
    \frac{a_p\lambda_{u_j}(p)}
    {p^{\frac{1}{2}}}\Big)^{2r}
    =
    \sum_{\substack{n=p_1^{e_1}\dots p_q^{e_q}\\p_1,\dots,p_q\leq x
    \\e_1+\dots+e_q=2r}}
    \frac{a_n}{n^{\frac{1}{2}}}
    \sum_{\substack{0\leq l_1\leq e_1\\l_1\equiv e_1\pmod{2}}}
    \dots
    \sum_{\substack{0\leq l_q\leq e_q\\l_q\equiv e_q\pmod{2}}}
    \\
    \frac{(2r)!\prod_{i=1}^{q}(l_i+1)}
    {\prod_{i=1}^{q}\Big( (\frac{e_i+l_i}{2}+1)!
    (\frac{e_i-l_i}{2})!\Big)}
     \sum_{j} \frac{\lambda_{u_j}(p_1^{l_1}\dots p_q^{l_q})
     \Phi_{X,Y,M}(t_j)}{L(1,\sym^2u_j)}
  \end{multline}
  By Lemma \ref{lemma:3.1}, this is equal to
  \begin{multline*}
     \sum_{\substack{n=p_1^{2f_1}\dots p_q^{2f_q}\\p_1,\dots,p_q\leq x
    \\f_1+\dots+f_q=r}}    \frac{(2r)!}{\prod_{i=1}^{q}\Big( f_i!(f_i+1)!\Big)}
    \frac{a_n}{n^{\frac{1}{2}}}G(X,Y,M)
    \\
    +O\Big(\sum_{\substack{n=p_1^{e_1}\dots p_q^{e_q}\\p_1,\dots,p_q\leq x
    \\e_1+\dots+e_q=2r}}
    \sum_{\substack{0\leq l_1\leq e_1\\l_1\equiv e_1\pmod{2}}}
    \dots
    \sum_{\substack{0\leq l_q\leq e_q\\l_q\equiv e_q\pmod{2}}}
    \frac{(2r)!\prod_{i=1}^{q}(l_i+1)}{\prod_{i=1}^{q}\Big( (\frac{e_i+l_i}{2}+1)!
    (\frac{e_i-l_i}{2})!\Big)}
    \frac{|a_n|}{n}X^{1+\varepsilon} \Big).
    \end{multline*}
Since $a_{p_1^{2f_1}\dots p_q^{2f_q}}\geq 0$, by using $(n+1)!\geq 2^{n}$, we have
  \begin{equation*}
  \frac{(2r)!}{\prod_{i=1}^{q}\Big( f_i!(f_i+1)!\Big)}\leq \frac{(2r)!}{r!}\frac{r!}
{\prod_{i=1}^{q}f_i!2^{f_i}}=
\frac{(2r)!}{r!2^r}\frac{r!}
{\prod_{i=1}^{q}f_i!},
    \end{equation*}
  \begin{equation*}
  \sum_{\substack{0\leq l_i\leq e_i\\l_i\equiv e_i\pmod{2}}}\frac{e_i!(l_i+1)}
{(\frac{e_i+l_i}{2}+1)!
(\frac{e_i-l_i}{2})!}=\frac{e_i!}
{\lceil\frac{e_i}{2}\rceil !
\lfloor\frac{e_i}{2} \rfloor !},
    \end{equation*}
    and
      \begin{equation*}
  \sum_{e_1+\dots+e_q=2r}\frac{(2r)!}{
  \prod_{i=1}^{q} e_i!}
  \frac{\prod_{i=1}^{q} e_i!}
{\lceil\frac{e_i}{2}\rceil !
\lfloor\frac{e_i}{2} \rfloor !}\leq
\sum_{q\leq 2r}2^{2q}< 2^{4r+1},
    \end{equation*}
thus
\begin{equation*}
    \sum_{j} \frac{\Phi_{X,Y,M}(t_j)}{L(1,\sym^2u_j)}
    \Big(\sum_{p\leq x}
    \frac{a_p\lambda_{u_j}(p)}
    {p^{\frac{1}{2}}}\Big)^{2r}
   \ll \frac{(2r)!}{r!2^r}\Big(\sum_{p\leq x}\frac{a_p^2}{p}\Big)^r G(X,Y,M)
    +2^{4r}
    X^{1+\varepsilon}x^{2r(\frac{1}{2}-\delta)}.
\end{equation*}
This completes the proof.
\end{proof}

Before stating the next lemma, let us introduce the following notion.
For $2\leq y\leq x$, let
\begin{equation}
  \mathcal{P}(t_j;x,y)
  =\sum_{p\leq y}
  \frac{(\ell_1+\ell_2\lambda_{\sym^2 f}(p)+\ell_3\lambda_{\sym^2 g}(p))\lambda_{u_j}(p)}
  {p^{\frac{1}{2}+\frac{1}{\log x}}}
  (1-\frac{\log p}{\log x}),
\end{equation}
and $\mathcal{A}_{X,Y}(V;x)=\#\{X<t_j\leq X+Y: \mathcal{P}(t_j;x,x)>V\}$.
Also, define
\begin{equation}
\sigma(X)^2=(\ell_1^2+\ell_2^2+\ell_3^2)
\log\log (X+t_g).
\end{equation}

\begin{lemma}\label{lemma:AXY}
 Assume GRH and GRC.
  Let $C\geq 1$ be fixed and $\varepsilon >0$ be sufficiently small.
  With the above notation, we have for
  $\sqrt{\log\log X}\leq V\leq C\log (X+t_g)/\log\log (X+t_g)$ that
  \begin{equation}
    \mathcal{A}_{X,Y}(V;
    X^{\frac{1}{\varepsilon V}})
    \ll
    G(X,Y,M)\Big(e^{-\frac{(1-2\varepsilon)V^2}
    {2\sigma(X)^2}} \log\log X
    +e^{-\frac{\varepsilon}{11}V\log V}\Big) +XM\sqrt{\log X}.
  \end{equation}
\end{lemma}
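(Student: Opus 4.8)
The plan is to run Soundararajan's large-deviation argument: bound the frequency of large values of the Dirichlet polynomial $\mathcal{P}(t_j;x,x)$ by estimating its high even moments over the spectrum and applying Chebyshev's inequality, with the order of the moment chosen according to the size of $V$.

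First I would observe that $\mathcal{P}(t_j;x,x)=\sum_{p\le x} a_p\,\lambda_{u_j}(p)\,p^{-1/2}$ with $a_p=(\ell_1+\ell_2\lambda_{\sym^2 f}(p)+\ell_3\lambda_{\sym^2 g}(p))\,p^{-1/\log x}(1-\log p/\log x)$, and that under GRC one has $a_p\ll 1$, so the hypothesis $a_p\ll p^{1/2-\delta}$ of Lemma \ref{lemma:sumusesummationformula} holds with room to spare for some fixed $\delta<1/2$. Next I would reduce the count $\mathcal{A}_{X,Y}(V;x)$ to the bulk interval $X+cM\sqrt{\log X}<t_j\le X+Y-cM\sqrt{\log X}$: by Weyl's law the spectral parameters have local density $\asymp X$, so the two edge strips of length $cM\sqrt{\log X}$ contribute $\ll XM\sqrt{\log X}$, which is exactly the last term in the claimed bound. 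On the bulk $\Phi_{X,Y,M}(t_j)\gg 1$, and since GRH gives $L(1,\sym^2 u_j)\ll(\log\log X)^{O(1)}$ one may pass from the raw count to the harmonically weighted sum $\sum_j \Phi_{X,Y,M}(t_j)L(1,\sym^2 u_j)^{-1}(\cdots)$ at the cost of a factor that is a power of $\log\log X$; this is the source of the $\log\log X$ in the first term.

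With these reductions, Chebyshev's inequality gives, for any $r\in\mathbb{N}$,
\[
\mathcal{A}_{X,Y}(V;x)\ll (\log\log X)^{O(1)}\,V^{-2r}\sum_j \frac{\Phi_{X,Y,M}(t_j)}{L(1,\sym^2 u_j)}\,\mathcal{P}(t_j;x,x)^{2r}+XM\sqrt{\log X}.
\]
Taking $x=X^{1/(\varepsilon V)}$, the constraint $x\le X^{1/(10r)}$ of Lemma \ref{lemma:sumusesummationformula} is exactly $r\le \varepsilon V/10$. Applying that lemma and bounding $\sum_{p\le x} a_p^2/p\le \sigma(X)^2$ via \eqref{eqn:sumofsumoverp} (using $\log\log x\le \log\log(X+t_g)$), the main term becomes $G(X,Y,M)\frac{(2r)!}{r!2^r}\sigma(X)^{2r}$, with Stirling asymptotic $\asymp G(X,Y,M)(2r\sigma(X)^2/e)^r$. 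I then optimize in $r$. If $V\le \varepsilon\sigma(X)^2/5$ the unconstrained optimum $r\approx V^2/2\sigma(X)^2$ respects the cap, and choosing $r=\lfloor V^2/2\sigma(X)^2\rfloor$ yields $V^{-2r}(2r\sigma(X)^2/e)^r\asymp e^{-V^2/2\sigma(X)^2}$, the losses from rounding, from Stirling constants, and from $\sum a_p^2/p\le\sigma(X)^2$ being absorbed into the factor $1-2\varepsilon$ in the exponent. If $V>\varepsilon\sigma(X)^2/5$ the cap binds; choosing $r=\lfloor \varepsilon V/10\rfloor$ gives $(2r\sigma(X)^2/(eV^2))^r=(\varepsilon\sigma(X)^2/(5eV))^r$, whose logarithm is $\sim-\tfrac{\varepsilon}{10}V\log V$ in this range, which I bound by $e^{-\frac{\varepsilon}{11}V\log V}$.

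The main obstacle will be controlling the off-diagonal error $\tfrac{2^{2r}(2r)!}{r!}X^{1+\varepsilon}x^{2r(1/2-\delta)}$ from Lemma \ref{lemma:sumusesummationformula} uniformly over the whole range $\sqrt{\log\log X}\le V\le C\log(X+t_g)/\log\log(X+t_g)$: after multiplying by $V^{-2r}$ it must stay below the main term $G(X,Y,M)e^{-\cdots}$ and the edge term $XM\sqrt{\log X}$. This is precisely where the two restrictions on $V$ enter: the upper bound keeps $x=X^{1/(\varepsilon V)}$ a genuine (if small) power of $X$, so that $r\le\varepsilon V/10$ forces $x^{2r(1/2-\delta)}\le X^{O(\varepsilon)}$ and $2^{2r}=X^{o(1)}$, while the lower bound $V\ge\sqrt{\log\log X}$ guarantees $r\ge 1$ and hence a genuine saving from $V^{-2r}$. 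Beyond this bookkeeping, the remaining care is purely in tracking the Stirling constants so as to produce the precise exponents $1-2\varepsilon$ and $\varepsilon/11$.
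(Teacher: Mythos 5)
There is a genuine gap: you apply Chebyshev's inequality directly to the \emph{full} polynomial $\mathcal{P}(t_j;x,x)$ over all primes $p\le x=X^{1/(\varepsilon V)}$, so the moment order is capped at $r\le \varepsilon V/10$ throughout. This cannot yield the bound of Lemma \ref{lemma:AXY} in the intermediate range of $V$. Your Gaussian choice $r\approx V^2/(2\sigma(X)^2)$ is admissible only for $V\le \varepsilon\sigma(X)^2/5\asymp \varepsilon\log\log(X+t_g)$, and for larger $V$ your capped choice gives
\begin{equation*}
  \Big(\frac{2r\sigma(X)^2}{eV^2}\Big)^{r}\asymp
  \exp\Big(-\frac{\varepsilon V}{10}\big(\log V-\log \sigma(X)^2+O(1)\big)\Big),
\end{equation*}
and here $\log\sigma(X)^2\asymp\log\log\log(X+t_g)$ is \emph{not} negligible: your claim that the logarithm is $\sim -\frac{\varepsilon}{10}V\log V$ (and hence $\le -\frac{\varepsilon}{11}V\log V$) holds only when $\log V \ge \frac{10}{11}\log V + \log\sigma(X)^2+O(1)$, i.e.\ when $V\gtrsim (\varepsilon\sigma(X)^2/5e)^{11}\asymp (\log\log(X+t_g))^{11}$. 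In the window $\varepsilon\sigma(X)^2/5 \ll V \ll (\log\log(X+t_g))^{11}$ --- take $V=(\log\log X)^2$ for concreteness --- your bound is $\exp\big(-\frac{\varepsilon}{10}V\log\log\log X\,(1+o(1))\big)$, whereas the lemma requires $\exp\big(-\frac{2\varepsilon}{11}V\log\log\log X\big)$ (its Gaussian term is far smaller there), and $\frac{1}{10}<\frac{2}{11}$, so the discrepancy factor tends to infinity. This window is not cosmetic: it is exactly where the Gaussian term must persist for the integral $\int e^{V}\mathcal{B}_{X,Y}(V+\mu(X))\,\dd V$ in the proof of Proposition \ref{prop:mixedmonents1} to produce $e^{\sigma(X)^2/2}$; with your weaker bound the integrand stays of size $e^{V(1-o(1))}$ up to $V\asymp \sigma(X)^2e^{10/\varepsilon}$, and one loses a power $(\log X)^{c\,\varepsilon^2 e^{10/\varepsilon}}$ that destroys the exponent in the proposition.

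The missing idea is the splitting device in Soundararajan's method, which the paper uses. Write $\mathcal{P}(t_j;x,x)=\mathcal{P}_1(t_j)+\mathcal{P}_2(t_j)$, where $\mathcal{P}_1=\mathcal{P}(t_j;x,z)$ runs over $p\le z:=x^{1/\log\log(X+t_g)}$ and $\mathcal{P}_2$ over $z<p\le x$; if $\mathcal{P}>V$ then $\mathcal{P}_1>(1-\varepsilon)V$ or $\mathcal{P}_2>\varepsilon V$. Since $\mathcal{P}_1$ has length $z$, Lemma \ref{lemma:sumusesummationformula} now permits $r$ up to $\frac{\varepsilon V}{10}\log\log(X+t_g)$, so the Gaussian choice $r=\lfloor V_1^2/(2\sigma(X)^2)\rfloor$ is admissible for all $V\le \frac{\varepsilon}{10}\sigma(X)^2\log\log(X+t_g)$, which covers precisely the problematic window; while by \eqref{eqn:sumofsumoverp} the tail $\mathcal{P}_2$ has variance only $O(\log\log\log(X+t_g))$, so its $2r$-th moment with $r=\lfloor\varepsilon V/10\rfloor$ bounds the exceedances at level $\varepsilon V$ by $G(X,Y,M)e^{-\frac{\varepsilon}{11}V\log V}$ (plus the edge term). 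Your peripheral reductions --- edge strips via Weyl's law giving $XM\sqrt{\log X}$, removing the weight $L(1,\sym^2 u_j)^{-1}$ at a $(\log\log X)^{O(1)}$ cost, and controlling the off-diagonal error using $a_p\ll 1$ and $r\ll \varepsilon\log X/\log\log X$ --- are consistent with the paper's proof, but without the two-part splitting the core estimate fails.
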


\begin{proof}
  We assume throughout that $\sqrt{\log\log X}\leq V\leq C\frac{\log (X+t_g)}{\log\log (X+t_g)}$.
  Set $x=\min\{(X+t_g)^{\frac{1}{\varepsilon V}},Y^{\frac{16-0.0001}{9\varepsilon V}}\}$
  and let
  $z=x^{\frac{1}{\log\log (X+t_g)}}$.
  Write $\mathcal{P}(t_j;x,x)
  =\mathcal{P}_1(t_j)+\mathcal{P}_2(t_j)$ where $\mathcal{P}_1(t_j)=\mathcal{P}(t_j;x,z)$.
  Also, let $V_1=(1-\varepsilon)V$ and $V_2=\varepsilon V$.
  If $\mathcal{P}(t_j;x,x)>V$ then
  \begin{equation}\label{eqn:case1}
    \mathcal{P}_1(t_j)>V_1,
  \end{equation}
  or
    \begin{equation}\label{eqn:case2}
    \mathcal{P}_2(t_j)>V_2.
  \end{equation}
  Using Lemma \ref{lemma:sumusesummationformula} and \eqref{eqn:sumofsumoverp}, we have for $r\leq \frac{\varepsilon V}{10}\log\log (X+t_g)$ that the number of $X< t_j\leq X+Y$ for which \eqref{eqn:case1} holds is bounded by
  \begin{multline}
    \frac{1}{V_1^{2r}}
    \sum_{X< t_j\leq X+Y}
    \mathcal{P}_1(t_j)^{2r}
    \ll
     \frac{\log\log X}{V_1^{2r}}
    \sum_{t_j}
     \frac{\Phi_{X,Y,M}(t_j)}
     {L(1,\sym^2u_j)}
    \mathcal{P}_1(t_j)^{2r}
    +XM\sqrt{\log X}
    \\ \ll
    G(X,Y,M)\log\log X\frac{(2r)!}{V_1^{2r}r!2^r}
    (\sigma(X)(1+o(1)))^{2r}
    +XM\sqrt{\log X}
    \\ \ll G(X,Y,M)\log\log X\Big(
    \frac{2r\sigma(X)^2(1+o(1))}{V_1^2 e}\Big)^r
    +XM\sqrt{\log X},
  \end{multline}
  where in the first step we applied $L(1,\sym^2u_j)\ll (\log\log X)^3$ under GRH and GRC, and in the third step we applied Stirling's formula.
  In the range $V\leq \frac{\varepsilon }{10}\sigma(X)^2\log\log (X+t_g)$ we set $r=\lfloor \frac{V_1^2}{2\sigma(X)^2} \rfloor$ and for larger $V$ we set $r=\lfloor \frac{\varepsilon V}{10} \rfloor$.
  Hence we have
  \begin{multline*}
    \# \{ X<t_j\leq X+Y: \mathcal{P}_1(t_j)>V_1 \}
    \\ \ll
    G(X,Y,M)\Big(
    e^{-(1-2\varepsilon)\frac{V^2}
    {2\sigma(X)^2}} \log\log X
    +e^{-\frac{\varepsilon}{11} V\log V}
    \Big) +XM\sqrt{\log X}.
  \end{multline*}
It remains to bound the number of $X<t_j\leq X+Y$ for which \eqref{eqn:case2} holds.
Take $r=\lfloor \frac{\varepsilon V}{10} \rfloor$.
As before, we use Lemma \ref{lemma:sumusesummationformula} and \eqref{eqn:sumofsumoverp} to bound this quantity by
\begin{multline}
  \frac{1}{V_2^{2r}}
    \sum_{X< t_j\leq X+Y}
    \mathcal{P}_2(t_j)^{2r}
    \ll
     \frac{\log\log X}{V_2^{2r}}
    \sum_{t_j}
     \frac{\Phi_{X,Y,M}(t_j)}
     {L(1,\sym^2u_j)}
    \mathcal{P}_2(t_j)^{2r}
    +XM\sqrt{\log X}
    \\ \ll
    G(X,Y,M)\log\log X\frac{(2r)!}{r!}
    \Big(\frac{C}{V_2^2}\log\log\log (X+t_g)
    \Big)^r +XM\sqrt{\log X}
    \\ \ll
    G(X,Y,M)e^{-\frac{\varepsilon}{11}V\log V} +XM\sqrt{\log X}.
\end{multline}
Then we complete the proof.
\end{proof}

\subsection{Proof of Proposition \ref{prop:mixedmonents1}}
  Using \eqref{eqn:heckerelation} and bounding the sum over $p^n\leq x$ with $n\geq 3$ we have
  \begin{multline}\label{eqn:Lambdasumuj}
\sum_{p^n\leq x}
    \frac{\Lambda_{u_j}(p^n)}
    {np^{n(\frac{1}{2}+\frac{1}{\log x})}}
    \frac{\log \frac{x}{p^n}}
    {\log x}
    =\sum_{p\leq x}
    \frac{\lambda_{u_j}(p)}
    {p^{\frac{1}{2}+\frac{1}{\log x}}}
    \frac{\log \frac{x}{p}}{\log x}
     +\frac{1}{2} \sum_{p\leq \sqrt{x}}
    \frac{\lambda_{\sym^2 u_j}(p)-1}
    {p^{1+\frac{2}{\log x}}}
    \frac{\log \frac{x}{p^2}}{\log x}
    +O(1),
  \end{multline}
  \begin{multline}\label{eqn:Lambdasumsym2fuj}
    \sum_{p^n\leq x}
    \frac{\Lambda_{\sym^2 f \times
    u_j}(p^n)}
    {np^{n(\frac{1}{2}+\frac{1}{\log x})}}
    \frac{\log \frac{x}{p^n}}
    {\log x}
    =\sum_{p\leq x}
    \frac{\lambda_{\sym^2 f}(p)
    \lambda_{u_j}(p)}
    {p^{\frac{1}{2}+\frac{1}{\log x}}}
    \frac{\log \frac{x}{p}}{\log x}
    \\ +\frac{1}{2} \sum_{p\leq \sqrt{x}}
    \frac{(\lambda_{\sym^4 f}(p)-\lambda_{\sym^2 f}(p)+1)
    (\lambda_{\sym^2 u_j}(p)-1)}
    {p^{1+\frac{2}{\log x}}}
    \frac{\log \frac{x}{p^2}}{\log x}
    +O(1),
  \end{multline}
  and
   \begin{multline}\label{eqn:Lambdasumsym2guj}
    \sum_{p^n\leq x}
    \frac{\Lambda_{\sym^2 g \times
    u_j}(p^n)}
    {np^{n(\frac{1}{2}+\frac{1}{\log x})}}
    \frac{\log \frac{x}{p^n}}
    {\log x}
    =\sum_{p\leq x}
    \frac{\lambda_{\sym^2 g}(p)
    \lambda_{u_j}(p)}
    {p^{\frac{1}{2}+\frac{1}{\log x}}}
    \frac{\log \frac{x}{p}}{\log x}
    \\
    +\frac{1}{2} \sum_{p\leq \sqrt{x}}
    \frac{(\lambda_{\sym^4 g}(p)-\lambda_{\sym^2 g}(p)+1)
    (\lambda_{\sym^2 u_j}(p)-1)}
    {p^{1+\frac{2}{\log x}}}
    \frac{\log \frac{x}{p^2}}{\log x}
    +O(1).
  \end{multline}
By using Lemma \ref{lemma:sumofcoefficients},
the second sum of \eqref{eqn:Lambdasumuj} equals
\begin{equation}\label{eqn:secondterms1}
  -\frac{1}{2}\log\log x+O(\log \log \log t_j),
\end{equation}
the second sum of
  \eqref{eqn:Lambdasumsym2fuj} equals
  \begin{equation}\label{eqn:secondterms2}
  -\frac{1}{2}\log\log x+O(\log \log \log (t_j+t_f)),
\end{equation}
and the second sum of
  \eqref{eqn:Lambdasumsym2guj} equals
  \begin{equation}\label{eqn:secondterms3}
  -\frac{1}{2}\log\log x+O(\log \log \log (t_j+t_g)).
\end{equation}

  Let \begin{equation}
    \mu(X)=(-\frac{1}{2}+\varepsilon)
  (\ell_1+\ell_2+\ell_3)
  \log\log (X+t_g).
  \end{equation}
  Also, define
  \begin{equation}
    \mathcal{L}(t_j)=
    L(1/2, u_j)^{\ell_1}
    L(1/2, \sym^2f\times u_j)^{\ell_2}
    L(1/2, \sym^2g \times u_j)^{\ell_3},
  \end{equation}
  $D_{t}\ll t$ be the dimension for the eigenspace with spectral parameter $t_j=t$, and $\mathcal{B}_{X,Y}(V)=\#\{X<t_j\leq X+Y: \log \mathcal{L}(t_j)>V, u_j\ncong  f,g\}$.
  Clearly,
  \begin{multline}
    \sum_{X<t_j\leq X+Y}
    \mathcal{L}(t_j)
    =-\int_{\mathbb{R}}
    e^V \dd \mathcal{B}_{X,Y}(V)
    +\delta_{X<t_f\leq X+Y}D_{t_f}\mathcal{L}(t_f)
    +\delta_{X<t_g\leq X+Y}D_{t_g}\mathcal{L}(t_g)
    \\ =\int_{\mathbb{R}}
    e^V \mathcal{B}_{X,Y}(V)\dd V
    +\delta_{X<t_f\leq X+Y}D_{t_f}\mathcal{L}(t_f)
    +\delta_{X<t_g\leq X+Y}D_{t_g}\mathcal{L}(t_g)
    \\ =e^{\mu(X)}\int_{\mathbb{R}}
    e^V \mathcal{B}_{X,Y}(V+\mu(X))
    \dd V
    +\delta_{X<t_f\leq X+Y}D_{t_f}\mathcal{L}(t_f)
    +\delta_{X<t_g\leq X+Y}D_{t_g}\mathcal{L}(t_g).
  \end{multline}
  Note that $\log \mathcal{L}(t_j)
  \leq C \log(X+t_g)/\log\log(X+t_g)$ for some $C>1$, which follows from using Lemma \ref{lemma:LogLfunctions}.
  So we only need to consider $\sqrt{\log\log X}
  \leq V\leq C\frac{\log (X+t_f)}
  {\log\log(X+t_f)}$ in the integral above, and for smaller $V$ we use the trivial bound $\mathcal{B}_{X,Y}(V)\leq G(X,Y,M)+cXM\sqrt{\log X}$.
  Let $x=(X+t_g)^{\frac{1}{\varepsilon V}}$.
  For $\sqrt{\log\log X}
  \leq V\leq (\log\log (X+t_g))^4$,
  \begin{equation*}
   -\frac{1}{2}
  (\ell_1+\ell_2+\ell_3)
  \log\log x
  +O(\log\log\log (t_g+t_j))
  \leq \mu(X).
  \end{equation*}
  It follows from Lemma \ref{lemma:LogLfunctions}, \eqref{eqn:secondterms1},
  \eqref{eqn:secondterms2}, and
  \eqref{eqn:secondterms3} that
  \begin{equation*}
    \mathcal{B}_{X,Y}(V+\mu(X))
    \leq \mathcal{A}_{X,Y}
    (V(1-2\varepsilon);x)
  \end{equation*}
  provided $\sqrt{\log\log X}
  \leq V\leq (\log\log (X+t_g))^4$.
  For $V\geq (\log\log (X+t_g))^4$
  the above inequality is also true since in this range $V+\mu(X)=V(1+o(1))$.
  Hence, combining estimates and applying Lemma \ref{lemma:AXY} there exists an absolute constant $C>0$ such that
  \begin{multline}
    \sum_{X<t_j\leq X+Y}
    \mathcal{L}(t_j)
    \\ \ll
    G(X,Y,M) e^{\mu(X)}
    \int_{\sqrt{\log\log X}}
    ^{C\frac{\log (X+t_f)}
  {\log\log(X+t_f)}}
    e^V \Big(e^{-\frac{(1-\epsilon)V^2}
    {2\sigma(X)^2}} \log\log X
    +e^{-\epsilon V\log V}\Big)
    \dd V
    \\+ XM(X+t_g)^{\frac{\varepsilon^2}{100}}
    \\ \ll G(X,Y,M)
    (\log (X+t_g))^\varepsilon
    e^{\mu{X}+\frac{\sigma(X)^2}{2}}
    +XM(X+t_g)^{\frac{\varepsilon^2}{100}}
    \\ \ll
    G(X,Y,M)(\log (X+t_g))
    ^{\frac{\ell_1(\ell_1-1)}{2}
    +\frac{\ell_2(\ell_2-1)}{2}
    +\frac{\ell_3(\ell_3-1)}{2}+\varepsilon}
    +XM(X+t_g)^{\frac{\varepsilon^2}{100}},
  \end{multline}
  where in the last step we have used the identity
  \begin{equation*}
    \int_{\mathbb{R}} e^{-\frac{x^2}{2\sigma^2}+x}\dd x
    =\sqrt{2\pi} \sigma e^{\frac{\sigma^2}{2}}.
  \end{equation*}
  This completes the proof.


\section*{Acknowledgements}

The authors want  to thank  Peter Humphries,  Steve Lester, Jianya Liu, and Ze\'ev Rudnick for helpful discussions.
%
Shenghao Hua thanks CSC for its financial support, and EPFL's TAN group for its host.
This material is based upon work supported
by the Swedish Research Council under grant no. 2021-06594
while Bingrong Huang was in residence at Institut Mittag-Leffler in Djursholm, Sweden
during the spring semester of 2024.


\end{document}